\newtheorem{theorem}{Theorem}[section]
\newtheorem{corollary}[theorem]{Corollary}
\newtheorem{lemma}[theorem]{Lemma}
\newtheorem{proposition}[theorem]{Proposition}
\newtheorem{observation}[theorem]{Observation}
\newtheorem{remark}[theorem]{Remark}
\newtheorem{claim}{Claim}[theorem]
\newcommand{\qedclaim}{\hfill $\diamond$ \medskip}
\newenvironment{proofclaim}{\noindent{\it Proof of Claim.}}{\qedclaim}
\newcommand\DELETE[1]{}
\newcommand{\R}{\small \mathcal{R}}
\newcommand{\C}{\small \mathcal{C}}
\begin{document}


\title{Cops and robber on variants of retracts and subdivisions of oriented graphs}
\author{
{\sc Harmender Gahlawat}$\,^{a}$, {\sc Zin Mar Myint}$\,^{b}$, {\sc Sagnik Sen}$\,^{b}$\\
\mbox{}\\
{\small $(a)$ Ben-Gurion University of the Negev, Beersheba, Israel}\\
{\small $(b)$ Indian Institute of Technology Dharwad, India.}\\}

\date{\today}

\maketitle

\begin{abstract}
\textsc{Cops and Robber} is one of the most studied two-player pursuit-evasion games played on graphs, where multiple \textit{cops}, controlled by one player, pursue a single \textit{robber}.  The main parameter of interest is the \textit{cop number} of a graph, which is the minimum number of cops that can ensure the \textit{capture} of the robber. 

\textsc{Cops and Robber} is also well-studied on directed/oriented graphs. In directed graphs, two kinds of moves are defined for players: \textit{strong move}, where a player can move both along and against the orientation of an arc to an adjacent vertex; and \textit{weak move}, where a player can only move along the orientation of an arc to an \textit{out-neighbor}. We study three variants of \textsc{Cops and Robber} on oriented graphs: \textit{strong cop model}, where the cops can make strong moves while the robber can only make weak moves; \textit{normal cop model}, where both cops and the robber can only make weak moves; and \textit{weak cop model}, where the cops can make weak moves while the robber can make strong moves. We study the cop number of these models with respect to several variants of retracts on oriented graphs and establish that the strong and normal cop number of an oriented graph remains invariant in their strong and distributed retracts, respectively. Next, we go on to study all three variants with respect to the subdivisions of graphs and oriented graphs. Finally, we establish that all these variants remain computationally difficult even when restricted to the class of 2-degenerate bipartite graphs. 
\end{abstract}

\noindent \textbf{Keywords:} 
Cops and Robber, Oriented Graphs, Retracts, Subdivisions.




\section{Introduction}
Among the games modeled on graph search, the
two-player combinatorial pursuit-evasion game called {\sc Cops and Robber} is one of the most popularly studied in literature~\cite{bonato2011game}. The game was introduced independently by  
Quilliot~\cite{quilliot1978jeux}, and Nowakowski and Winkler~\cite{nowakowski1983vertex} on simple graphs. 
It gained a lot of popularity following its inception, primarily due to its various applications in topics like
artificial intelligence~\cite{isaza2008cover, klein2012catch}, constrained satisfaction problems and database theory~\cite{gottlob2000comparison,gottlob2001complexity}, distributed computing~\cite{d2017unified,czyzowicz2014evacuating} and network decontamination~\cite{flocchini2019distributed},
as well as for its deep impact on 
graph theory and algorithms~\cite{abraham2014cops,seymour1993graph}.
As a result, several variants of the game have been introduced and studied, many of which have deep connections and significant impacts on some of the aforementioned topics. For example, several variants of the game are shown to have correspondence with width parameters like treewidth~\cite{seymour1993graph}, pathwidth~\cite{parsons1}, tree-depth~\cite{depth}, hypertree-width~\cite{adler}, cycle-rank~\cite{depth}, and directed tree-width~\cite{dtwidth}.

Even though most of the variants are modeled on simple graphs, 
there exist natural variant(s) defined and studied on directed graphs and oriented graphs as well~\cite{loh2017cops,kinnersley2015cops,frieze2012variations}. Recently, Das et al.~\cite{das2021cops} studied three natural variations of the game on oriented graphs, namely, the \textit{strong}, \textit{normal}, and \textit{weak cop} models. In this article, we continue to build on their works by focusing on finding fundamental structural results for these models. We especially concentrate on exploring the
game's interaction with variants of retracts and particular types of subdivisions of the oriented graphs. Our structural results corresponding to the subdivisions also establish the computational hardness results for these variants. The primary goal of this paper is to contribute to building a theory of {\sc Cops and Robber} on oriented graphs.

\subsection{{\sc Cops and Robber} on Oriented Graphs}
An \textit{oriented graph} $\overrightarrow{G}$ is a directed 
graph having no loop or parallel arcs in opposite directions. An oriented graph may indeed have parallel arcs in the same direction 
between two vertices; however, for our works, such 
parallel arcs are redundant. 
Therefore, without loss of generality, unless otherwise stated, 
we will assume that the underlying graph $G$ of the oriented 
graphs $\overrightarrow{G}$ is simple, finite, connected, and contains at least two vertices.  With this, our  ``playing field'' (oriented graphs)  is ready, and thus, let us try to understand the game.

Let us assume $\overrightarrow{G}$ is the oriented graph on which we play the game. To begin with, Player~$1$ (the cop player) will place $k$ \textit{cops} on the vertices of $\overrightarrow{G}$, and then, Player~$2$ (the robber player) will place a \textit{robber} on a vertex of $\overrightarrow{G}$. After this initial set-up, the players will take turns, starting with Player~$1$, to move the cops (resp., the robber) from one vertex to another following game rules (depending on the game model). If, after a finite number of turns, a cop and the robber end up on the same vertex, that is, if a cop \textit{captures} the robber, then Player~$1$  wins. Otherwise, Player~$2$ wins.

This describes the game in general; however, the rules for moving the cops (resp., the robber) will be described while presenting the game models. On an oriented graph, two kinds of the move are of interest: a \textit{strong move} where a cop (or the robber) can shift from a vertex $u$ to its neighbor $v$ irrespective of the direction of the arc joining $u$ and $v$, and a
\textit{weak move} 
where a cop (or the robber) can shift from a vertex $u$ to its neighbor $v$ only if there is an arc from $u$ to $v$. 
The three models of the game are determined by the allowed moves for the Players~$1$ and~$2$. We list them below for convenience. 
\begin{enumerate}[(i)]
    \item \textit{The strong cop model:} In their turn, Player~$1$ can make at most one strong move for each of the cops, while Player~$2$ can only make at most one weak move for the robber. 

    \item \textit{The normal cop model:} In their turn, Player~$1$ (resp., Player~$2$) can make at most one weak move for each of the cops (resp. the robber). 

    \item \textit{The weak cop model:} In their turn, Player~$1$ can make at most one weak move for each of the cops, while Player~$2$ can make at most one strong move for the robber. 
\end{enumerate}
We need to recall some related necessary parameters~\cite{das2021cops} for continuing the study. 
The \textit{strong cop number} (resp., \textit{normal cop number}, 
\textit{weak cop number}) of an oriented graph $\overrightarrow{G}$, 
denoted by 
$c_s(\overrightarrow G)$ (resp., $c_n(\overrightarrow G)$, $c_w(\overrightarrow G)$), 
is the minimum number of cops needed by Player~$1$ to ensure a winning strategy on $\overrightarrow{G}$.  Moreover, $\overrightarrow{G}$ is \textit{strong-cop win} 
(resp., \textit{normal-cop win}, \textit{weak-cop win}) 
if its strong cop number 
(resp., normal cop number, weak cop number) is $1$. 
From the definitions, one can observe the relation:

\begin{equation}   
c_s(\overrightarrow G) \leq c_n(\overrightarrow G) \leq c_w(\overrightarrow G).
\end{equation}

Given a family $\mathcal{F}$ of oriented graphs, the parameters are defined by

\begin{equation}  
c_{\alpha}(\mathcal{F}) = \max\{c_{\alpha}(\overrightarrow G)\in \mathcal{F}\},
\end{equation}

for all $\alpha \in \{n, s, w\}$. 

\begin{remark}  If both Player~$1$ and Player~$2$ are allowed to make strong moves, then this game is the same as the game of \textsc{Cops and Robber} on the underlying undirected graph. Moreover, given an undirected graph $G$, its \textit{cop number}, denoted by $c(G)$, is the minimum number of cops needed by Player~$1$ to have a winning strategy for a game 
played on $G$. If the cop number of a graph $G$ is $1$, then we say that $G$ is cop win. 
\end{remark}

\subsection{Motivation and Context}
The normal cop model is well-studied in the context of directed/oriented graphs, while the two other variations are recent~\cite{das2021cops}. Hamidoune \cite{hamidoune} considered the game on Cayley digraphs. Frieze et al.~\cite{frieze2012variations} studied the game on digraphs and gave an upper bound of $\mathcal{O}\left(\frac{n(\log \log n)^2}{\log n}\right)$ for cop number in digraphs.

Hahn and MacGillivray~\cite{hahn} gave an algorithmic characterization of the cop-win finite reflexive digraphs and showed that any $k$-cop game can be reduced to $1$-cop game, resulting in an algorithmic characterization for $k$-copwin finite reflexive digraphs. 
However, these results do not give a structural characterization of such graphs. Later, Darlington et al.~\cite{darlington2016cops} tried to structurally characterize cop-win oriented graphs and gave a conjecture, which was later disproved by Khatri et al. ~\cite{khatri2018study}. This is evidence that the problem is not so straightforward to solve. 

Recently, the cop number of planar Eulerian digraphs and related families were studied in several articles \cite{mohar2,  hosseini2018, mohar}. Bradshaw et al.~\cite{bradshaw2021cops} proved that the cop number of directed and undirected Cayley graphs on abelian groups has
an upper bound of the form of $\mathcal{O}(\sqrt{n})$. Modifying this construction, they obtained families of graphs and digraphs with cop number $\Theta(\sqrt{n})$. 
In general, the problem of determining the cop number of a directed graph is known to be EXPTIME-complete due to Kinnersley~\cite{kinnersley2015cops}, which positively settled 
a conjecture by Goldstein and Reingold~\cite{goldstein1995complexity}.

Overall, the cop number is well-studied but, surprisingly, less understood on directed/oriented graphs. This article attempts to address this issue by studying some fundamentals in this domain.

\subsection{Our Contributions and Organization}
In Section~\ref{preliminaries}, we present some useful preliminaries.

In Section~\ref{retracts}, we deal with variants of retracts. To elaborate, the graph $G - v$ is a \textit{retract} of $G$ if there are vertices $u,v\in V(G)$ satisfying $N[v] \subseteq N[u]$. Here, we also say that $v$ is a \textit{corner vertex}. One key step in establishing the full characterization of cop win (undirected)  graphs was a lemma which proved that a graph is cop win if and only if its retract is also cop win. 
The characterization of weak-cop win oriented graphs also used a similar lemma for weak-retract (defined in Section~\ref{retracts}). We prove the analogs of the key lemmas for strong and normal models, even though we are yet to succeed in providing an exact characterization of  strong- (resp., normal-)cop win oriented graphs. 

In Section~\ref{strong-sub} and~\ref{weak-sub}, we study the effect of two different subdivisions, namely, the \textit{strong subdivision} and the \textit{weak subdivision}, on cop numbers.  The precise definitions are provided in Sections~\ref{strong-sub} and~\ref{weak-sub}, respectively. 
For classical {\sc Cops and Robber} game, some classical results study the effect of subdivisions on the cop number 
of an undirected graph establishing that the cop number of a graph does not decrease if we subdivide each of its edges a constant number of times~\cite{berarducci1993cop,joret}. 
On the other hand, in~\cite{das2021cops}, a special case of the strong subdivision was used as a tool to prove results and provide interesting examples. In this article, we study the effect of these two subdivisions on the cop numbers and establish the relation between cop number parameters involving these subdivisions.

In Section~\ref{com-compl}, we prove that unless $P=NP$, determining the strong, normal, and weak cop numbers are not polynomial-time solvable, even if we restrict the input graphs to the class of $2$--degenerate bipartite oriented graphs. 

In Section~\ref{conclusion}, we conclude the article, including the mention of some open problems.

\section{Preliminaries}\label{preliminaries}

This paper considers the game on oriented graphs whose underlying graph is simple, finite, and connected. Let $\overrightarrow{G}$ be an oriented graph whose underlying graph is $G$. We also say that $\overrightarrow G$ is an \textit{orientation} of $G$. 
Let $\overrightarrow{uv}$ be an arc of $\overrightarrow{G}$. 
We say that $u$ is an \textit{in-neighbor} of $v$ and $v$ is an \textit{out-neighbor} of $u$.
Let $N_{\overrightarrow{G}}^-(u)$ and $N_{\overrightarrow{G}}^+(u)$ denote the set of in-neighbors and out-neighbors of $u$, respectively.  Moreover, let $N_{\overrightarrow{G}}^+[v] = N^+(v) \cup \{v\}$ and $N_{\overrightarrow{G}}^-[v] = N^-(v) \cup \{v\}$. When it is clear from the context, by $N_{\overrightarrow{G}}(v)$ we denote $N_{\overrightarrow{G}}^+(v) \cup N^-(u)$, and by $N_{\overrightarrow{G}}[v]$ we denote $N_{\overrightarrow{G}}(v)\cup \{v\}$. Similarly, for an undirected graph $H$ and a vertex $v\in V(H)$, let $N_H(v)$ denote the set of neighbors of $v$ and let $N_H[v] = N_H(v)\cup \{v\}$.  Moreover, when it is clear from the context, to ease the presentation, we drop the subscript ${\overrightarrow{G}}$ (and $H$) from these notations. 

A vertex without any in-neighbor is a \textit{source}, and a vertex without any out-neighbor is a \textit{sink}. A vertex $v$ is said to be \textit{dominating} if $N^+[v] = V(\overrightarrow{G})$.
Let $v$ be vertex of $\overrightarrow{G}$ and $S$ is a subset of vertices of $\overrightarrow{G}$ (i.e., $S\subseteq V(\overrightarrow{G})$). Then, we say that $v$ is a {\em source in $S$} if $S\subseteq N^+[v]$. Moreover, we say that $|N^+(v)|$ is the \textit{out-degree} of $v$, $|N^-(v)|$ is the \textit{in-degree} of $v$, and $|N^+(v)|+|N^-(v)|$ is the \textit{degree} of $v$. An undirected graph $G$ is $k$-\textit{degenerate} if, for every induced subgraph $H$ of $G$, there is a vertex in $H$ with at most $k$-degree. An oriented graph is $k$-\textit{degenerate} if its underlying graph is $k$-degenerate.



\section{Retracts}\label{retracts}
Retracts are shown to have close relationships with the game of \textsc{Cops and Robber} on undirected graphs~\cite{berarducci1993cop}. In fact, the first characterization of cop win graphs used the concept of retracts~\cite{bonato2011game}. Moreover, the characterization of weak-cop win graphs is based on the notion of \textit{weak-retracts} (defined below). Thus, it makes sense to study the (strong/weak/normal) cop number of oriented graphs with respect to retracts.

Given an oriented graph $\overrightarrow{G}$, let $u$ and $v$  be two adjacent vertices satisfying $N[v] \subseteq N[u]$.
In such a scenario, the oriented graph $\overrightarrow{G} - v$  is a \textit{strong-retract} of $\overrightarrow{G}$. 
Given an oriented graph 
$\overrightarrow{G}$, let $\overrightarrow{u_1v},\overrightarrow{u_2v}, \cdots,\overrightarrow{u_pv}$  be $p$ arcs satisfying $N^+(v) \subseteq N^+(u_i)$, for each $i \in [p]$, 
and $N^-(v) \subseteq \bigcup_{i=1}^p N^-[u_i]$. 
In such a scenario, the oriented graph $\overrightarrow{G} - v$  is a \textit{distributed-retract} of $\overrightarrow{G}$. 
Given an oriented graph $\overrightarrow{G}$, let $\overrightarrow{uv}$ be an arc satisfying 
$N(v) \subseteq N^+[u]$. In such a scenario, the oriented graph $\overrightarrow{G} - v$  is a \textit{weak-retract} of $\overrightarrow{G}$.

In~\cite{das2021cops}, it was proved that an oriented graph is weak-cop win if and only if its weak-retract is weak-cop win. Here, we extend this result to prove that the strong and normal cop number of an oriented graph remains invariant in their strong-retracts and distributed-retracts, respectively. 

\begin{theorem}\label{th strong-retract}
Let $\overrightarrow{G}'$ be a strong-retract of $\overrightarrow{G}$. 
Then $c_s(\overrightarrow{G})=c_s(\overrightarrow{G}')$.
\end{theorem}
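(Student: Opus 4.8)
The plan is to establish the equality by proving the two inequalities $c_s(\overrightarrow{G}') \le c_s(\overrightarrow{G})$ and $c_s(\overrightarrow{G}) \le c_s(\overrightarrow{G}')$ separately. Throughout, write $\overrightarrow{G}' = \overrightarrow{G} - v$ where $u$ and $v$ are adjacent with $N[v] \subseteq N[u]$, and let $f \colon V(\overrightarrow{G}) \to V(\overrightarrow{G}')$ be the retraction sending $v$ to $u$ and fixing every other vertex. The hypothesis $N[v] \subseteq N[u]$ says exactly that $f$ is a reflexive homomorphism of the \emph{underlying} undirected graphs: for every edge $xy$ of $G$, either $f(x)f(y)$ is an edge of $G'$ or $f(x)=f(y)$. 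The point to keep in mind is that $f$ is typically \emph{not} a homomorphism of the oriented graphs, since an arc $\overrightarrow{vw}$ maps to an adjacent pair $u,w$ whose connecting arc in $\overrightarrow{G}'$ may be oriented either way; this mismatch is exactly what makes one of the two inequalities delicate.

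For $c_s(\overrightarrow{G}') \le c_s(\overrightarrow{G})$: since $\overrightarrow{G}'$ is an induced sub-oriented-graph of $\overrightarrow{G}$, every weak move available to the robber in $\overrightarrow{G}'$ is also a weak move in $\overrightarrow{G}$, so a robber playing on $\overrightarrow{G}'$ is a legal robber on $\overrightarrow{G}$. Fix an optimal strategy for $c_s(\overrightarrow{G})$ cops on $\overrightarrow{G}$. On $\overrightarrow{G}'$ the cops imagine this same robber playing on $\overrightarrow{G}$, run the optimal $\overrightarrow{G}$-strategy, and at each step stand on the $f$-image of where that strategy places the imagined cops. Since the cops make strong moves and $f$ preserves underlying adjacency, each projected step is a legal strong move (or a stay) in $\overrightarrow{G}'$; and when the imagined strategy puts a cop on the robber's vertex $r$ --- which lies in $V(\overrightarrow{G}')$, so $f(r)=r$ --- the projected cop sits on $r$ as well. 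Hence $c_s(\overrightarrow{G}') \le c_s(\overrightarrow{G})$.

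For $c_s(\overrightarrow{G}) \le c_s(\overrightarrow{G}')$ --- the substantive direction --- I would adapt the classical ``shadow robber'' argument. Let $k = c_s(\overrightarrow{G}')$ and fix a winning $k$-cop strategy $\sigma$ on $\overrightarrow{G}'$, taken positional. On $\overrightarrow{G}$ the cops keep a shadow robber at $f(r)$ (where $r$ is the real robber's vertex) and keep their own positions equal to those of a configuration reachable by $\sigma$ against that shadow. While the real robber stays in $V(\overrightarrow{G}')=V(\overrightarrow{G})\setminus\{v\}$, the shadow equals the robber, each of the robber's weak moves is a legal weak move of the shadow, the simulation is transparent, and a cop landing on the shadow captures the real robber. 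When the robber sits on $v$ the shadow sits on $u$; because $N[v]\subseteq N[u]$ and the cops move strongly, a cop on $u$ dominates all of $N[v]$, so as soon as $\sigma$ drives a cop onto $u$ the real robber is caught within one more strong move (the cop steps to $v$, or to the vertex $w\in N^+(v)\subseteq N[u]$ the robber has just fled to).

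The genuine obstacle is the transitions in and out of $v$: when the robber crosses between $v$ and a neighbour $w$, the induced shadow move between $u$ and $w$ may run against the arc $uw$ and therefore fail to be a legal weak move for a robber in $\overrightarrow{G}'$, so $\sigma$ cannot simply be handed that move. One way to attack this is to pass instead to the digraph $\overrightarrow{D}=f(\overrightarrow{G})$, onto which $f$ \emph{is} a homomorphism, giving $c_s(\overrightarrow{G})\le c_s(\overrightarrow{D})$ for free; $\overrightarrow{D}$ is obtained from $\overrightarrow{G}'$ by turning certain edges at $u$ into digons, and one would then argue that such a modification does not change the strong cop number, using that for strong cops a cop on $u$ is already as effective near $v$ as a cop on $v$ itself. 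Alternatively one can stay inside $\overrightarrow{G}'$ and let the cops temporarily deviate from $\sigma$ whenever a ``bad'' crossing occurs, steering a cop toward $u$ and re-synchronising the shadow afterwards, the whole point being that this manoeuvre consumes no extra cop because $u$ is already a vertex of the board $\sigma$ plays on. Turning either of these ideas into a rigorous, terminating argument --- i.e.\ controlling precisely what the cops do during the robber's excursions through $v$ --- is the step I expect to be the crux; the rest is routine bookkeeping around $f$.
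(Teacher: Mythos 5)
Your first inequality, $c_s(\overrightarrow{G}') \le c_s(\overrightarrow{G})$, is correct and is essentially the paper's argument: project the $\overrightarrow{G}$-strategy through $f$, using that strong cops move on the underlying graph and that $N[v]\subseteq N[u]$ makes every prescribed move into, out of, or from $v$ realisable from $u$.

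The second inequality is where your proposal stops, and there is a genuine gap there: you correctly identify the obstacle but prove nothing past it. Neither of your two suggested repairs is routine. Passing to $f(\overrightarrow{G})$, i.e.\ adding the arcs $\overrightarrow{xu}$ for $x\in N^-(v)$ and $\overrightarrow{uw}$ for $w\in N^+(v)$, only enlarges the weak robber's move set while leaving the strong cops' moves unchanged, so a priori it can only \emph{increase} the strong cop number --- you need the opposite inequality, and ``a cop on $u$ is as effective near $v$ as a cop on $v$'' is not an argument that these extra robber moves are harmless. The ``deviate from $\sigma$ and re-synchronise'' alternative likewise needs a concrete progress measure showing the excursions through $v$ cannot recur forever, which you do not supply. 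That said, your diagnosis of where the difficulty sits is accurate, and you should know that it is precisely the step the paper's own proof passes over in silence: the paper plays the image $I_{\R}$ of the robber on $\overrightarrow{G}'$ and invokes the winning strategy for $c_s(\overrightarrow{G}')$ cops against it, but that strategy is only guaranteed against a robber making weak moves, whereas the image's transitions $x\to u$ (when the arc is $\overrightarrow{ux}$) and $u\to w$ (when the arc is $\overrightarrow{wu}$) need not be weak moves of $\overrightarrow{G}'$. Contrast the distributed-retract definition, whose conditions $N^+(v)\subseteq N^+(u_i)$ and $N^-(v)\subseteq\bigcup_{i} N^-[u_i]$ are exactly what make the analogous shadow moves legal in Theorem~\ref{th distributed-retract}; the strong-retract condition $N[v]\subseteq N[u]$ provides no such guarantee. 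So what is missing --- from your write-up and from the paper's --- is an actual argument that a winning strategy on $\overrightarrow{G}'$ tolerates these against-the-arc moves at $u$, and supplying it (or a different route to $c_s(\overrightarrow{G})\le c_s(\overrightarrow{G}')$) is the real content of this direction.
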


\begin{proof} 
Since $\overrightarrow{G}'$ is a strong-retract of $\overrightarrow{G}$, we may assume that 
$\overrightarrow{G}'= \overrightarrow{G} -v$, 
$u$ and $v$ are adjacent, and $N[v] \subseteq N[u]$. 

First suppose that $c_s(\overrightarrow{G})=k$. 
We will use the winning strategy of $k$ cops in $\overrightarrow{G}$ to get a winning strategy for $k$ cops in $\overrightarrow{G}'$ with the only difference being: whenever a cop, say $\C$, has to move to the vertex $v$, it moves to $u$ instead. 
Observe that $\C$ can make this move as $N[v] \subseteq N[u]$ and $\C$ can make strong moves. For the same reason, the next move of $\C$ will be as it would have been in the winning strategy for $\overrightarrow{G}$. That is, if in $\overrightarrow{G}$, $\C$ stays on $v$ or moves to some $w\in N(v)$, then, in $\overrightarrow{G}'$, $\C$ will stay on $u$ or move to $w$, respectively. The second instance is possible as $N(v) \subseteq N[u]$. Since the movement of $\R$ is restricted to $\overrightarrow{G}'$, $k$ cops will capture $\R$ after a finite number of moves using this strategy.

For the converse, suppose that $c_s(\overrightarrow{G}')=k$. Before going to the proof, we will define the \textit{image of the robber}, denoted by $I_{\R}$, a function from $V(\overrightarrow{G}) \to V(\overrightarrow{G}')$ as follows: 
\begin{equation*}
I_{\R}(x) =
\begin{cases} 
 x & \text{if $x\neq v$}, \\
u & \text{if $x=v$}.
   \end{cases}
\end{equation*}
We will use the winning strategy of $k$ cops in $\overrightarrow{G}'$ to get a winning strategy for $k$ cops in $\overrightarrow{G}$.  
Let us assume that the game is being played on $\overrightarrow{G}$. However, moves of $\R$  on 
$\overrightarrow{G}$ are emulated via the image function $I_{\R}(x)$ on $\overrightarrow{G}'$. The cops will move in 
$\overrightarrow{G}'$ to capture the image of the robber, and the cops on $\overrightarrow{G}$ will play the exact same moves (it is possible as $\overrightarrow{G}'$ is a subgraph of $\overrightarrow{G}$). At the time of the capture of the
image of the robber in $\overrightarrow{G}'$, if the robber is on any vertex other than $v$ in $\overrightarrow{G}$, then it gets captured there as well. 
If the robber is on the vertex $v$ of $\overrightarrow{G}$ at the time when its image gets captured on $\overrightarrow{G}'$,  then in $\overrightarrow{G}$, there is a cop on the vertex $u$ at that point of time. Therefore, as $N[v] \subseteq N[u]$, the robber will get captured in the next move. 
\end{proof}


 Next, we show that the normal cop number of an oriented graph remains invariant in its distributed retracts. 

\begin{theorem}\label{th distributed-retract}
Let $\overrightarrow{G}'$ be a distributed-retract of $\overrightarrow{G}$. 
Then $c_n(\overrightarrow{G})=c_n(\overrightarrow{G}')$.
\end{theorem}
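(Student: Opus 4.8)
The plan is to follow the same two-directional simulation used in the proof of Theorem~\ref{th strong-retract}, with the single proxy vertex $u$ replaced by the set $\{u_1,\dots,u_p\}$; throughout write $\overrightarrow{G}'=\overrightarrow{G}-v$ and keep in mind the two defining conditions, $N^+(v)\subseteq N^+(u_i)$ for \emph{every} $i\in[p]$, and $N^-(v)\subseteq\bigcup_{i=1}^{p}N^-[u_i]$.

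For the inequality $c_n(\overrightarrow{G}')\le c_n(\overrightarrow{G})$, I would take a winning strategy for $k=c_n(\overrightarrow{G})$ cops on $\overrightarrow{G}$ and run it on $\overrightarrow{G}'$ against the real robber. Since $\overrightarrow{G}'$ is a subgraph of $\overrightarrow{G}$ and the robber never occupies $v$, its play is a legal play on $\overrightarrow{G}$, so the $\overrightarrow{G}$-strategy responds to it. The one change is this: whenever the $\overrightarrow{G}$-strategy tells a cop currently at a vertex $w$ to step onto $v$ (which forces $w\in N^-(v)$), we instead move that cop to some $u_i$ with $w\in N^-[u_i]$ --- such an $i$ exists by the second condition, and the move is a legal weak move, or a stay if $w=u_i$. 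I would maintain the invariant that this cop is on $v$ in the simulated $\overrightarrow{G}$-game exactly when it is on the chosen $u_i$ in the real $\overrightarrow{G}'$-game, and on the same vertex as in the simulation otherwise. The invariant survives: while the $\overrightarrow{G}$-strategy keeps the cop on $v$ we keep it on $u_i$, and when that strategy moves the cop off $v$ to some $x\in N^+(v)$, the first condition gives $x\in N^+(u_i)$, so the weak move $u_i\to x$ is available. As the robber never sits on $v$, the cop that catches it in $\overrightarrow{G}$ is not on $v$, hence sits on the robber's vertex in $\overrightarrow{G}'$ as well, and the capture transfers.

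For the reverse inequality $c_n(\overrightarrow{G})\le c_n(\overrightarrow{G}')$, I would play on $\overrightarrow{G}$ and, as in the strong-retract proof, track an \emph{image} of the robber on $\overrightarrow{G}'$: the image is the robber's own vertex whenever that vertex is not $v$, and otherwise it is a vertex $u_i$ selected at the instant the robber enters $v$, namely one with $w\in N^-[u_i]$ where $w$ is the vertex the robber came from (and the image then remains on this $u_i$ while the robber sits on $v$). Each robber move is matched by a legal weak move or a stay of the image: moves avoiding $v$ are copied verbatim; a move $v\to x$ with $x\in N^+(v)$ is matched by $u_i\to x$, legal by the first condition; a move into $v$ is handled as just described. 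The cops run their winning $\overrightarrow{G}'$-strategy against the image and replay the same moves on $\overrightarrow{G}$, which is legal since $\overrightarrow{G}'$ is a subgraph of $\overrightarrow{G}$. When a cop lands on the image, either the robber is off $v$ and the image equals the robber, so we are done, or the robber is on $v$ and a cop sits on the corresponding $u_i$; since $\overrightarrow{u_iv}$ is an arc and $N^+(v)\subseteq N^+(u_i)$, that cop catches the robber within one further round, whether the robber stays on $v$ or flees along an arc leaving $v$.

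The step I expect to be the real obstacle is exactly the one that is trivial in the strong model: because normal-model moves respect arc directions, there is no single static proxy for $v$, so the proxy $u_i$ must be chosen according to the arc along which a token most recently entered $v$. Verifying that this dynamic choice always yields legal weak moves is where the asymmetry between the two defining conditions --- that ``$N^+(v)\subseteq N^+(u_i)$ for every $i$'' while only ``$N^-(v)\subseteq\bigcup_i N^-[u_i]$'' --- is used in an essential way, and the bounded cleanup after the image is caught requires a small but careful turn-order argument so that the finishing move really is the next cop move.
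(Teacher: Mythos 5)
Your proposal is correct and follows essentially the same two-way simulation as the paper's proof: in one direction the cops' visits to $v$ are rerouted to a $u_i$ chosen by the entering arc (using $N^-(v)\subseteq\bigcup_i N^-[u_i]$), and in the other a shadow/image of the robber on $\overrightarrow{G}'$ is tracked, with the final capture completed in one extra round via the arc $\overrightarrow{u_iv}$ and $N^+(v)\subseteq N^+(u_i)$. The only cosmetic difference is that the paper fixes the choice of $u_i$ by taking the minimum admissible index, while you leave the choice arbitrary; both work, and your explicit verification of the legality of the continuation move off $u_i$ is a point the paper leaves as an ``observe that.''
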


\begin{proof}
Since $\overrightarrow{G}'$ is a distributed-retract of $\overrightarrow{G}$, we may assume that 
$\overrightarrow{G}'= \overrightarrow{G} -v$, 
and 
$\overrightarrow{u_1v},\overrightarrow{u_2v}, \cdots,\overrightarrow{u_pv}$  are $p$ arcs satisfying $N^+(v) \subseteq N^+(u_i)$ 
and $N^-(v) \subseteq \bigcup_{i=1}^p N^-[u_i]$, for each $i \in [p]$.

First of all, suppose that $c_n(\overrightarrow{G})=k$. 
We are going to show that $k$ cops have a winning strategy in 
$\overrightarrow{G}'$ as well. The idea is to play the game simultaneously on $\overrightarrow{G}$ and $\overrightarrow{G}'$. The robber $\R$ will originally move in $\overrightarrow{G}'$, while on $\overrightarrow{G}$ it will simply mimic the moves (it is possible as $\overrightarrow{G}'$ is a subgraph of $\overrightarrow{G}$). 

On the other hand, the cops will use a winning strategy to capture the robber in $\overrightarrow{G}$, and we will use this strategy to provide a winning strategy on $\overrightarrow{G}'$ as well. 
In fact, we will use the 
exact same strategy on $\overrightarrow{G}'$ with the only difference being: a cop $\C$ will move to one of the $ u_i$s 
in $\overrightarrow{G}'$ when its corresponding cop moves to the vertex $v$ in $\overrightarrow{G}$. 
The choice of this $u_i$ will depend on the movement of $\C$ in that particular turn. To elaborate, 
in $\overrightarrow{G}$, $\C$ must have moved from some 
$v^- \in N^-(v)$ to $v$. According to the definition of distributed-retract, $v^{-}$ belongs to 
$N^{-}[u_j]$ for some $j\in [p]$. Choose the minimum index $i$ among all such $u_j$s for which $v^{-} \in N^{-}[u_j]$. The corresponding $u_i$ is our choice for positioning $\C$ in that particular turn. 
Observe that it is possible for a cop to make its moves following the above strategy in $\overrightarrow{G}'$, following the game rules. Since the movement of the robber $\R$ is restricted to the vertices of $\overrightarrow{G}'$, it will get captured on both graphs in this strategy.

Next, we will show the other direction, that is, we will
suppose that $c_n(\overrightarrow{G}')=k$ and show that there is a winning strategy for $k$ cops on $\overrightarrow{G}$. 
To do so, we will play the game simultaneously on $\overrightarrow{G}$ and $\overrightarrow{G}'$. The robber 
$\R$ will originally move on $\overrightarrow{G}$ and its shadow
$\R_S$ will move on $\overrightarrow{G}'$. Now, the $k$
cops will capture $\R_S$ on $\overrightarrow{G}'$ (as we know 
$k$ cops have a winning strategy on $\overrightarrow{G}'$). We 
will mimic the moves of the cops on $\overrightarrow{G}'$  on 
$\overrightarrow{G}$ (it is possible since 
$\overrightarrow{G}'$ 
is a subgraph of $\overrightarrow{G}$).   

To begin with, let us describe the movements of $\R_S$. 
Whenever $\R$ is at any vertex other than $v$, $\R_S$ is also 
on that vertex. If $\R$ starts at $v$, then $\R_S$ will start 
at $u_1$. 
Moreover, during the play, 
if $\R$ moves from a particular vertex $v^{-}$ to $v$, 
then $\R_S$ will move to $u_i$, 
where $i$ is the minimum index satisfying $v^- \in N^-[u_i]$.
Observe that, it is possible for $\R_S$ to make its moves 
following the above-mentioned rules.

After a finite number of moves, $\R_S$ will get captured on $\overrightarrow{G}'$. 
At that point in time, either $\R$ also gets captured on $\overrightarrow{G}$, or it must be placed on $v$ with a cop $\C$ placed on $u_j$ for some $j$. 
In the latter case, $\R$ will get captured in the next turn. 
\end{proof}

In particular, the above result implies that cop win oriented graphs are distributed-retract invariant. 
To complement the above result, we prove a sufficient condition for an oriented graph to be not  cop win.

\begin{theorem}
     If for every arc $\overrightarrow{uv}$ in $\overrightarrow{G}$, there exists an 
     out-neighbor $v^+$ of $v$ that is not an out-neighbor of $u$, then $\overrightarrow{G}$ is not cop win. 
\end{theorem}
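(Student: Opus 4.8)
The plan is to exhibit an explicit perpetual evasion strategy for a single robber in the normal cop model; this shows $c_n(\overrightarrow{G})\geq 2$, i.e.\ that $\overrightarrow{G}$ is not cop win. Throughout, recall that in this model a player standing on a vertex $x$ may, on its turn, either stay at $x$ or move to an out-neighbour of $x$; equivalently, after its move it occupies some vertex of $N^+[x]$.

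The first step is a small consequence of the hypothesis that lets the robber start safely: $\overrightarrow{G}$ has no dominating vertex. Indeed, if $N^+[w]=V(\overrightarrow{G})$ for some $w$, then (as $\overrightarrow{G}$ has at least two vertices) $w$ has an out-neighbour $x$, and applying the hypothesis to the arc $\overrightarrow{wx}$ produces a vertex $x^+\in N^+(x)\setminus N^+(w)$; since $N^+(w)=V(\overrightarrow{G})\setminus\{w\}$, this forces $x^+=w$, so $\overrightarrow{xw}$ is an arc as well, contradicting the fact that an oriented graph has no digon.

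Now for the strategy. After the cop chooses its start vertex $c_0$, the robber (using the previous step) places itself on any vertex $r_0\notin N^+[c_0]$, so that the cop's first move, which lands in $N^+[c_0]$, cannot capture it. The invariant I would maintain is: \emph{at the start of each robber turn the robber is on a vertex $r$ and the cop is on a vertex $c\neq r$.} Given such a position, the robber moves as follows. If $\overrightarrow{cr}$ is not an arc, then $r\notin N^+[c]$ and the robber passes. If $\overrightarrow{cr}$ is an arc, the hypothesis yields $r^+\in N^+(r)\setminus N^+(c)$; since $\overrightarrow{G}$ has no digon, $c\notin N^+(r)$, hence $r^+\neq c$ and therefore $r^+\notin N^+[c]$, and the robber moves to $r^+$. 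In both cases the robber ends its turn on a vertex outside $N^+[c]$, so the cop's following move (into $N^+[c]$) cannot land on it, and the resulting position again satisfies the invariant. Thus the robber is never captured, and one cop cannot win.

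I expect the only point requiring care --- and the actual heart of the argument --- to be the two appeals to the ``oriented'' hypothesis (absence of digons): it is precisely this that upgrades the hypothesis's guarantee ``$r^+\notin N^+(c)$'' to the genuinely usable ``$r^+\notin N^+[c]$'', and likewise rules out a dominating cop start. Everything else is bookkeeping: the robber's move is always legal (when $\overrightarrow{cr}$ is an arc, the hypothesis in particular certifies $N^+(r)\neq\emptyset$), and the verification of the invariant by induction on the turns is routine.
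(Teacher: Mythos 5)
Your proof is correct and takes essentially the same approach as the paper's: whenever the cop threatens the robber along an arc $\overrightarrow{cr}$, the hypothesis supplies an escape vertex $r^+\in N^+(r)\setminus N^+(c)$, and the absence of digons upgrades this to $r^+\notin N^+[c]$. The paper's version is a two-line sketch of the same evasion argument; your write-up merely supplies the details it glosses over (the safe initial placement via the non-existence of a dominating vertex, and the explicit digon check).
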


\begin{proof}
    Suppose the cop $\C$ is \textit{attacking} the robber $\R$. That means, we may assume that $\C$ is on $u$ and $\R$ is on $v$ for some arc $\overrightarrow{uv}$. 
    We know that there 
    exists some $v^{+} \in N^+(v) \setminus N^+(u)$. Thus, the robber will move to such a $v^+$ and avoid the capture. 
\end{proof}

\section{Strong Subdivisions}\label{strong-sub}
Let $G$ be a simple, connected, and finite graph. Then, $\overrightarrow{S}_t(G)$ is the oriented graph obtained by replacing each edge $uv$ of $G$ by two directed paths of \textit{length} (number of arcs) $t$: one starting from $u$ and ending at $v$, and the other starting at $v$ and ending at $u$. The 
oriented graph $\overrightarrow{S}_t(G)$ is called the 
\textit{strong $t$-subdivision} of $G$. See Figure~\ref{fig:1} for a reference. As we deal only with simple oriented graphs here, the value of $t$ is at least $2$. 

\begin{figure}
    \centering
    \includegraphics{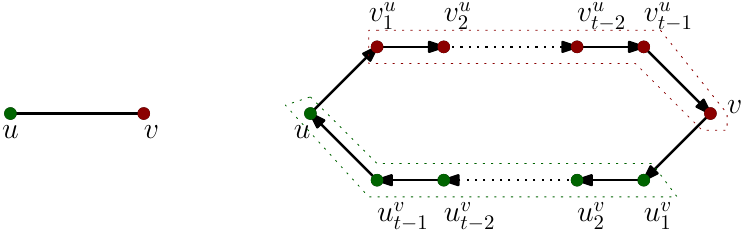}
    \caption{Illustration of subdivision of an edge $uv$. Here, in the subdivided part, for each red vertex $x$, $f(x)=v$, and for each green vertex $y$, $f(y)=u$.}
    \label{fig:1}
\end{figure}

For the ease of presentation of proofs, we provide an explicit construction of $\overrightarrow {S}_t(G)$ from $G$. Consider an edge $uv\in E(G)$. This edge is replaced by two directed paths of length $t$ each: one from $u$ to $v$ of the form $u v^u_1 v^u_2 \cdots v^u_{t-1} v$, and one from $v$ to $u$ of the form $v u^v_1 u^v_2 \cdots u^v_{t-1} u$. Moreover, the vertices $u$ and $v$ are termed as the \textit{original vertices}, and the vertices of the form $v^u_i$ and $u^v_j$ are termed as the \textit{new vertices}. Furthermore, we define a function $f:V(\overrightarrow {S}_t(G)) \rightarrow V(G)$ such that for any $x,y\in V(G)$, $i\in [t-1]$ and $x^y_i\in V(\overrightarrow {S}_t(G))$, $f(x^y_i)= x$ and $f(x) =x$. Finally, we have the following easy observation that will be useful for us. 

\begin{observation}\label{O:trivial}
For any two vertices $x,y\in V(\overrightarrow {S}_t(G))$, if there is a directed path from $x$ to $y$ of length at most $t$, then $f(y) \in N_G[f(x)]$ and $f(x)\in N_G[f(y)]$.
\end{observation}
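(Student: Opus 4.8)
The plan is to analyze the structure of directed paths in $\overrightarrow{S}_t(G)$ by tracking how the function $f$ behaves along the arcs of such a path. The key point is that $\overrightarrow{S}_t(G)$ is built entirely out of directed paths of length exactly $t$ connecting original vertices, so any directed path either stays inside one such ``gadget'' path or crosses through original vertices, and I want to show a short path cannot cross \emph{two} original vertices that are non-adjacent in $G$.

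\textbf{Key steps.} First I would describe the local structure of arcs in $\overrightarrow{S}_t(G)$: every arc is of one of the forms $\overrightarrow{x\, x^y_1}$, $\overrightarrow{x^y_i\, x^y_{i+1}}$, or $\overrightarrow{x^y_{t-1}\, y}$ for some edge $xy\in E(G)$, and in each of these cases the $f$-values of the two endpoints are $\{x\}$ or $\{x,y\}$ with $x,y$ adjacent in $G$; in particular, for any arc $\overrightarrow{ab}$ we have $f(b)\in N_G[f(a)]$. Second, I would observe that once a directed path enters the interior of the gadget associated to an edge $xy$ (the new vertices $x^y_1,\dots,x^y_{t-1}$ of the $x$-to-$y$ path), the only way to leave it is to continue to $y$, which takes exactly $t-i$ further steps if we entered at $x^y_i$; hence a directed path of length at most $t$ starting at an original vertex passes through at most one intermediate original vertex, and more generally visits original vertices whose $f$-values all lie in a single closed neighborhood $N_G[w]$. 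Third, I would do a short case analysis on $f(x)$ and $f(y)$: if $x$ is an original vertex, the path lies along one directed gadget-path (possibly entering a second gadget at its far endpoint), forcing $f(y)$ to be $f(x)$, a neighbor of $f(x)$, or — in the one borderline case where $x$ is an original vertex and the path has length exactly $t$ reaching the next original vertex $y$ — an $N_G$-neighbor of $f(x)$; if $x$ is a new vertex $a^b_i$, the path of length $\le t$ can reach at most the original vertex $b$ and then at most the new vertices one gadget beyond it, so $f(y)\in\{a,b\}\cup N_G(b)\subseteq N_G[a]\cup N_G[b]$, and since $a,b$ are adjacent this is contained in $N_G[f(x)]$ after checking the subcase $f(y)=a$ separately. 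The symmetric statement $f(x)\in N_G[f(y)]$ follows by the identical argument applied to the reversed roles, or simply by noting the whole argument is symmetric in the two path-endpoints up to reversing which gadget we track.

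\textbf{Main obstacle.} The only delicate point is the bookkeeping at the boundary case where the directed path has length \emph{exactly} $t$: then it can run from one original vertex straight through to the ``next'' original vertex (distance exactly $t$ along a gadget path), and if it instead starts at a new vertex $a^b_i$ it can reach $b$ and then step $i$ vertices into a neighboring gadget. One must check that in every such extremal configuration the two relevant original vertices are adjacent in $G$ (they are, since consecutive original vertices on a gadget path correspond to the two endpoints of a single edge of $G$), so that $f(y)$ and $f(x)$ still lie in each other's closed neighborhoods. This is entirely routine once the arc-level claim $f(b)\in N_G[f(a)]$ for each arc $\overrightarrow{ab}$ is in place and one bounds the number of original vertices a length-$\le t$ path can traverse; I would present it as a two- or three-line case check rather than a formal induction.
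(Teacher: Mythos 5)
The paper states Observation~\ref{O:trivial} without proof (it is offered as an ``easy observation''), so there is no official argument to compare against; judged on its own, your proposal is correct and is the natural argument. Two remarks. First, the bookkeeping can be packaged more cleanly than a case split on where $x$ sits: along any directed path in $\overrightarrow{S}_t(G)$, the value of $f$ changes only on arcs that leave an original vertex (the arc $\overrightarrow{u\,v^u_1}$ changes the $f$-value from $u$ to the $G$-adjacent vertex $v$, while arcs $\overrightarrow{v^u_i\,v^u_{i+1}}$ and $\overrightarrow{v^u_{t-1}\,v}$ preserve it), and after such a change the path must traverse the remaining $t-1$ arcs of that gadget before it can reach another original vertex and change $f$ again, so two changes cost at least $t+1$ arcs. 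Hence a path of length at most $t$ changes $f$ at most once, which is exactly the claim, and the ``length exactly $t$'' boundary case is handled automatically. Second, be careful with the paper's naming convention: the intermediate vertices of the directed path from $u$ to $v$ are $v^u_1,\dots,v^u_{t-1}$, so $x^y_i$ lies on the path from $y$ to $x$ and leads \emph{forward to $x$}, consistently with $f(x^y_i)=x$. In your second and third steps you treat $a^b_i$ as leading forward to $b$, which is what forces the awkward set $\{a,b\}\cup N_G(b)$ and the ``check $f(y)=a$ separately'' caveat; with the correct orientation the $f$-values reachable from $a^b_i$ in at most $t$ steps form exactly $N_G[a]=N_G[f(a^b_i)]$, with no residual subcase. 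Finally, the two conclusions $f(y)\in N_G[f(x)]$ and $f(x)\in N_G[f(y)]$ are literally equivalent, since closed-neighborhood membership in the undirected graph $G$ is symmetric, so no reversed-roles argument is needed for the second one.
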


In what follows, we will provide both upper and lower bounds on the strong, normal, and weak cop number of $\overrightarrow{S}_t(G)$ in terms of $c(G)$. In~\cite{das2021cops}, it was proved that the cop number of a graph $G$ is a natural lower bound for the strong cop number of $\overrightarrow{S}_2(G)$. Here, we generalize this result to $\overrightarrow{S}_t(G)$. Specifically, we have the following lemma. 



\begin{lemma}\label{L:lowerBound}
    Let $G$ be a simple graph. Then, for any $t>1$, $c_s(\overrightarrow{S}_t(G)) \geq c(G)$.
\end{lemma}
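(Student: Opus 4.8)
The plan is to show that a winning strategy for $k$ cops on $\overrightarrow{S}_t(G)$ can be converted into a winning strategy for $k$ cops on $G$, which immediately gives $c(G) \le c_s(\overrightarrow{S}_t(G))$. The natural device is the projection $f : V(\overrightarrow{S}_t(G)) \to V(G)$ already introduced in the excerpt: we will play a ``shadow'' game on $\overrightarrow{S}_t(G)$ and project everything down to $G$ via $f$. The key asymmetry to exploit is that on $G$ all moves are strong (it is the classical game), so the robber on $G$ has at least as much freedom as the shadow robber we will place on $\overrightarrow{S}_t(G)$.

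First I would set up the correspondence. Suppose $c_s(\overrightarrow{S}_t(G)) = k$ and fix a winning cop strategy there. Given a robber strategy on $G$ that we wish to defeat, we simulate a shadow robber $\R_S$ on $\overrightarrow{S}_t(G)$: whenever the real robber sits on a vertex $w \in V(G)$, we keep $\R_S$ on the original vertex $w$ of $\overrightarrow{S}_t(G)$; when the real robber moves along an edge $ww'$ of $G$, the shadow robber ``walks'' along the appropriate directed path of length $t$ from $w$ to $w'$ inside $\overrightarrow{S}_t(G)$ — but this takes $t$ turns on $\overrightarrow{S}_t(G)$ while only one turn elapses on $G$, so the time scales must be reconciled: one round on $G$ corresponds to $t$ rounds on $\overrightarrow{S}_t(G)$. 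Concretely, the cops on $G$ will make one move for every $t$ moves their shadow counterparts make on $\overrightarrow{S}_t(G)$; I will let a cop on $G$ occupying $f(\text{(shadow cop position)})$ update its position once every $t$ shadow-turns, choosing its new $G$-vertex to match where the shadow cop has travelled. The point of Observation~\ref{O:trivial} is exactly that a shadow cop travelling a directed path of length at most $t$ moves its $f$-image to a vertex in the closed neighborhood in $G$ of where it started, so this coarse projection of cop moves is a legal move (or a pass) in $G$.

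The key steps, in order, would be: (1) define the shadow robber's walk on $\overrightarrow{S}_t(G)$ and verify it is a legal sequence of weak moves — this uses that $\overrightarrow{S}_t(G)$ contains directed paths of length $t$ in \emph{both} directions along each edge, which is why $t$-subdivision (not ordinary subdivision) is needed; (2) define the projected cop moves on $G$ and check legality via Observation~\ref{O:trivial}; (3) run the fixed winning strategy against $\R_S$ on $\overrightarrow{S}_t(G)$, obtaining capture after finitely many shadow-turns, i.e. some shadow cop $\C$ and $\R_S$ occupy the same vertex $z$; (4) show this forces capture on $G$: at that moment $f(z)$ is occupied by a cop on $G$ and $f(z)$ is exactly where the real robber is (if $z$ is an original vertex) or within distance $1$ of it in $G$ (if $z$ is a new vertex, meaning the real robber is mid-edge-traversal), and in the latter case the cop on $G$ catches the robber on the next $G$-turn. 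A small technical point to handle in step (4) is the boundary case where the shadow robber is, say, partway along the $w \to w'$ path when caught; then $f(z) \in \{w, w'\}$ by the path structure, and the cop is on one of the two endpoints of the edge the robber is crossing, so capture follows within one additional move on $G$.

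The main obstacle I expect is the bookkeeping around the time-dilation and the ``mid-edge'' robber positions: one must be careful that the shadow robber's position is always well-defined and legal after every single shadow-turn (not just after blocks of $t$), and that the cops' projected positions on $G$ only need updating in sync with the robber's arrival at original vertices — or, alternatively, that an out-of-sync update is still a legal (possibly trivial) move in $G$. A clean way to sidestep part of this is to have $\R_S$ simply remain stationary on $\overrightarrow{S}_t(G)$ for the turns in which it is ``waiting'' and only commit to traversing an edge-path once the real robber has chosen its move; since reflexive/pass moves are allowed, this keeps $\R_S$ legal at all times. With that convention the argument reduces to: finitely many shadow-turns $\Rightarrow$ finitely many $G$-turns $\Rightarrow$ capture on $G$, contradicting $c(G) > k$ if we had assumed it. Hence $c(G) \le c_s(\overrightarrow{S}_t(G))$.
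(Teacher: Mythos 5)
Your proposal is correct and follows essentially the same route as the paper: simulate the real $G$-game with a shadow robber walking the length-$t$ directed paths of $\overrightarrow{S}_t(G)$, run the $k$-cop winning strategy there, and project cop positions back to $G$ via $f$ using Observation~\ref{O:trivial}, with one $G$-round corresponding to $t$ shadow rounds. Your extra care about mid-path captures is harmless but not needed, since every internal vertex of the $w \to w'$ path already has $f$-image $w'$, so a capture there projects directly onto the real robber's position.
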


\begin{proof}
    Let $c_s(\overrightarrow{S}_t(G)) =k$. We will show that $k$ cops have a winning strategy in $G$ as well. To this end, we borrow the winning strategy of cops from $\overrightarrow{S}_t(G)$. As the game is played in $G$, we play a game simultaneously in $\overrightarrow{S}_t(G)$ to use the winning strategy of cops from $\overrightarrow{S}_t(G)$. 
    
    \medskip 
    
    \noindent \textit{Game Setup:} The $k$ cops begin by placing themselves on the vertices of $\overrightarrow{S}_t(G)$ as per the winning strategy. Accordingly, we place the cops on the vertices of $G$ such that if a cop, say $\C_i$, is placed on a vertex $x\in V(\overrightarrow{S}_t(G))$, then we place $\C_i$ on the vertex $f(x)$ in $V(G)$. Now, $\R$ enters a vertex, say $u$, in $V(G)$. In $\overrightarrow{S}_t(G)$ also, we place $\R$ on the same vertex $u$. 

    \medskip 
    
    \noindent \textit{Move Translations:} Now, the game proceeds as follows. Each round in $G$ is translated to $t$ rounds in $\overrightarrow{S}_t(G)$. For each move of $\R$ in $G$ from a vertex $u$ to a vertex $v$, we make $t$ moves of $\R$ in $\overrightarrow{S}_t(G)$ from $u$ to $v$ along the directed path $u v^u_1 \cdots v^u_{t-1} v$ if $u$ and $v$ are distinct vertices, else $\R$ stays at the same vertex for the next $t$ moves in $\overrightarrow{S}_t(G)$. Cops will move according to the winning strategy in $\overrightarrow{S}_t(G)$ for these $t$ moves in $\overrightarrow{S}_t(G)$. Notice that in these $t$ moves, if a cop starts from a vertex $x$ and finishes at a vertex $y$, then there is a directed path either from $x$ to $y$ or from $y$ to $x$ of length at most $t$. Therefore, due to Observation~\ref{O:trivial}, $f(y)\in N_G[f(x)]$. Thus, when a cop, say $\C_i$, moves from a vertex $x$ to a vertex $y$ in these $t$ moves in $\overrightarrow{S}_t(G)$, we move $\C_i$ from $f(x)$ to $f(y)$ in $G$. 
    
    \medskip
    
    \noindent \textit{Capture:} Then, the game goes on like this. Since $k$ cops have a winning strategy in $\overrightarrow{S}_t(G)$, they will capture $\R$ after a finite number of rounds in $\overrightarrow{S}_t(G)$. That is, after a finite number of rounds a cop, say $\C_i$, and $\R$ will be on the same vertex $x$ in $\overrightarrow{S}_t(G)$. This translates to $\C_i$ and $\R$ both being on $f(x)$ in $G$. This completes our proof.
\end{proof}

Next, we provide an upper bound on the normal cop number (and hence, on the strong cop number) of $\overrightarrow{S}_t(G)$ by establishing that it is at most $c(G)+1$. In particular, we have the following lemma.

\begin{lemma}\label{L:upperBound}
    Let $G$ be a simple graph. Then, $c_n(\overrightarrow{S}_t(G)) \leq c(G)+1$.
\end{lemma}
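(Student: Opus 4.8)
The plan is to use one of the $c(G)+1$ cops as a \emph{pursuer} and to let the remaining $c(G)$ cops \emph{shadow} a fixed winning strategy for $c(G)$ cops in the classical game on $G$. Write $k=c(G)$, fix such a $G$-strategy $\sigma$, and recall the two structural features of $\overrightarrow{S}_t(G)$ that drive everything: a new vertex has a unique out-arc and a unique in-arc inside its directed path (so a robber on a new vertex is dragged forward to an original vertex), and the directed path between two adjacent original vertices has length exactly $t$. I would organise the play into \emph{blocks} of $t$ consecutive turns. The shadow cops $\C_1,\dots,\C_k$ begin and end every block on original vertices: if in that block $\sigma$ tells $\C_i$ to cross an edge $xy$ of $G$, then $\C_i$ spends the block walking along the path $x\, y^x_1\, y^x_2\cdots y^x_{t-1}\, y$ of $\overrightarrow{S}_t(G)$, and if $\sigma$ tells $\C_i$ to stay, $\C_i$ simply waits. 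Over any $t$ consecutive turns the robber's position moves along a directed path of $\overrightarrow{S}_t(G)$ of length at most $t$, so by Observation~\ref{O:trivial} the vertex $f(\R)$ changes by at most a closed-neighbour step in $G$; thus block by block $f(\R)$ behaves exactly like a token in $G$ that may stay or step to a neighbour, and since the shadow cops' block-$j$ move is the move $\sigma$ prescribes against the position of that token at the end of block $j-1$, the shadow cops are faithfully playing the $G$-game with $\sigma$. Hence after finitely many blocks $\sigma$ drives a shadow cop onto the token $f(\R)$.

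A token capture is not yet a robber capture, because $f(\R)=v$ only means the robber lies on $v$ or on a directed path of $\overrightarrow{S}_t(G)$ leading to $v$; near $v$ the robber can still run around a directed cycle that $f$ collapses to an edge of $G$ (a doubled subdivided edge), which no single cop can close — so $k$ shadow cops alone may never turn the token capture into a real one, and this is exactly the job of the extra cop. I would have the pursuer $\C_0$ keep the invariant that at the start of each cops' turn there is a directed path of length two from $\C_0$'s vertex to the robber's vertex; once $\C_0$ advances a step it lies one arc behind the robber, so the robber must step forward (otherwise it is captured next turn) and $\C_0$ restores the length-two gap. Under this invariant the robber can never dwell anywhere: it is forever crossing subdivided paths at full speed, it hits an original vertex exactly every $t$ turns, and $\C_0$ is always one subdivided path behind it. To install the invariant initially, observe that while $\C_0$ is not yet two behind, every turn on which the robber stays put or doubles back shortens the directed distance from $\C_0$ to it; and if the robber never lets $\C_0$ close in, then it is already moving at full speed with no room to loiter, which is the regime we wanted. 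With $\C_0$ engaged, whenever $f(\R)=v$ the robber is genuinely being marched toward the original vertex $v$ along a fixed directed path with $\C_0$ right behind it, so a shadow cop sitting on $v$ cannot be avoided: the robber is either walked into that cop or overtaken by $\C_0$.

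The step I expect to be the real obstacle is making this timing exact. One must line the $t$-turn shadow transits up with the robber's $t$-turn path crossings so that, at the moment $\sigma$ declares the token captured at $v$, the responsible shadow cop is \emph{already on $v$ while the robber is still committed to $v$}, and not arriving at $v$ a block too late, after the robber has slipped onto the next path. Pinning down this off-by-one — choosing precisely when $\C_0$'s forcing is switched on and when each shadow cop commits to its transit, so that the simulated $G$-capture lands as a genuine capture on $\overrightarrow{S}_t(G)$ — is where the freedom of the extra cop is actually spent, and the one place I would argue in detail rather than by analogy.
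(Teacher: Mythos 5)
Your overall architecture is the same as the paper's: $c(G)$ cops shadow a winning $G$-strategy $\sigma$ block-by-block along the subdivided paths, and the one extra cop is a pursuer that forces the robber to keep moving. Your length-two invariant is a valid sharpening of what the paper does with its ``dummy cop'': since the pursuer can always step along a shortest directed path towards $\R$ and the robber's weak move lengthens that path by at most one arc, $d(\C_0,\R)$ never increases over a round and strictly decreases whenever $\R$ stalls, so the forcing is sound. The genuine gap is exactly the step you flag and then decline to argue: converting the simulated capture of the token $f(\R)$ into a capture in $\overrightarrow{S}_t(G)$. As you yourself observe, when $\sigma$ issues its capturing move the responsible shadow cop only reaches $v=f(\R)$ at the end of a block, while the robber crosses original vertices at its own phase; worse, in the normal cop model a shadow cop caught mid-transit on some $y^x_j$ can only continue forward to $y$, so it cannot divert towards $v$ on short notice. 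No choice of when to switch on $\C_0$ repairs this by timing alone, because the robber controls its own phase; the proof as written stops at its crux.

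The paper closes this step not by aligning phases but by changing what the shadow cops do at the end. It records that just before the capture in $G$ the token sits at a vertex $u$ such that for every $w\in N_G(u)$ there is a cop on some vertex of $N_G[w]$ --- hence, in $\overrightarrow{S}_t(G)$, a cop parked on an original vertex with a directed path of length at most $t$ to $w$. At that point the shadow cops stop following $\sigma$ and simply hold these original vertices; the pursuer alone forces $\R$ first onto $u$ and then onto some outgoing path $u\,v^u_1\cdots v^u_{t-1}\,v$, at which moment $\R$ is committed (its only weak moves go forward along that path) and the single cop guarding $v$ walks its at most $t$ arcs to $v$ and waits there to intercept. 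To complete your argument you would need (i) this ``freeze on original vertices and cover all of $N_G(u)$'' endgame in place of executing $\sigma$'s capturing move in lockstep, and (ii) a justification that the winning strategy in $G$ may be assumed to reach such a fully-cornered position before capturing --- the paper asserts this cornering property, and it is where the real content of the endgame lives, not in the block bookkeeping.
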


\begin{proof}
    Let $k$ cops have a winning strategy in $G$. We will use this strategy to get a winning strategy for $k+1$ cops in $\overrightarrow{S}_t(G)$ for the normal cop model. Here also, we will play two games simultaneously. 
    
    \medskip
    
    \noindent \textit{Game Setup:} The game begins with $k$ cops placing themselves on the vertices of $G$ as per the winning strategy. Let the $k$ cops be marked as $\C_1, \ldots, \C_k$. We place $k$ cops on the same vertices in $\overrightarrow{S}_t(G)$, i.e., if a cop $\C_i$ is on a vertex $x$ in $G$, then $\C_i$ is placed on the vertex $x$ in $\overrightarrow{S}_t(G)$ as well. Moreover, we have an extra \textit{dummy cop}, denoted $D_1$, in $\overrightarrow{S}_t(G)$. We place $D_1$ on an arbitrary vertex in $\overrightarrow{S}_t(G)$. Now, $\R$ enters on a vertex, say $x$, in $\overrightarrow{S}_t(G)$. We place $\R$ on $f(x)$ in $G$. 

    \medskip
    
    \noindent \textit{Move Translations:} Now, the game proceeds as follows. The cops move in $G$ as per their winning strategy. This move of cops is translated to $t$ moves of cops in $\overrightarrow{S}_t(G)$ as follows. If a cop $\C_i$ moves from a vertex $u$ to $v$ in $G$, then it moves from $u$ to $v$ in $\overrightarrow{S}_t(G)$ along the directed path $u v^u_1 \cdots v^u_{t-1} v$. During these $t$ moves, $\R$ might move from its current position $x$ to some vertex $y$ such that there is a directed path from $x$ to $y$ of length at most $t$. Therefore, due to Observation~\ref{O:trivial}, $f(y)\in N_G[f(x)]$. Thus, we move $\R$ from $f(x)$ to $f(y)$ in $G$. Then, the game goes on.

    \medskip
    
    \noindent \textit{Capture:} Since $k$ cops have a winning strategy in $G$, after a finite number of rounds, they can capture $\R$ in $G$. Consider the position of cops and the robber just before the capture. Let $\R$ be on a vertex $u\in V(G)$. Then, there is a cop at a vertex $v\in N_G(u)$, and for every vertex $w\in N_G(u)\setminus \{v\}$, there is a cop on some vertex in $N_G[w]$.  This position translates to $\overrightarrow{S}_t(G)$ in the following manner. $\R$ occupies either the vertex $u$ or some vertex $u^w_j$ where $w\in N_G(u)$ and $j<t$. Moreover, for every vertex $v\in N_G(u)$, there is a cop that can reach $v$ in at most $t$ rounds. Now, comes the role of the dummy cop. In each round, $D_1$ moves towards $\R$ (it can do so since $\overrightarrow{S}_t(G)$ is strongly connected) and forces $\R$ to move after every finite number of rounds. This way, first, $\R$ is forced to  move to $u$, and then to move to some vertex $v^u_1$ where $v\in N_G(u)$. Since there is a cop that can reach $v$ in at most $t$ rounds, this cop, say $\C_i$, will start moving towards $v$. Moreover, since $\R$ has to move after every finite number of rounds (due to the dummy cop), it will eventually reach $v$ in no more than $t-1$ rounds, where it will be captured by $\C_i$. Hence, $c_n(\overrightarrow{S}_t(G)) \leq c(G)+1$.   
\end{proof}

Thus, we get the following theorem as a consequence of Lemma~\ref{L:lowerBound} and Lemma~\ref{L:upperBound} that bounds both strong cop number and normal cop number of $\overrightarrow{S}_t(G)$ in terms of $c(G)$.

\begin{theorem}\label{th strong vs cn ub lb}
Let $G$ be a simple graph. Then, 
$c(G)\leq c_s(\overrightarrow{S}_t(G))\leq c_n(\overrightarrow{S}_t(G)) \leq c(G)+1$.
\end{theorem}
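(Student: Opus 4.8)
The plan is to obtain Theorem~\ref{th strong vs cn ub lb} as an immediate corollary of the two lemmas already established, together with the basic inequality~$c_s \leq c_n$ for oriented graphs. Concretely, I would argue as follows. By Lemma~\ref{L:lowerBound}, for every $t > 1$ we have $c(G) \leq c_s(\overrightarrow{S}_t(G))$, which gives the leftmost inequality. The middle inequality $c_s(\overrightarrow{S}_t(G)) \leq c_n(\overrightarrow{S}_t(G))$ is just the general relation~(1) from the introduction applied to the oriented graph $\overrightarrow{S}_t(G)$, since the robber is more powerful in the weak-move-only regime for cops than when cops can move strongly (formally, any winning strategy for $c_s$ cops is a winning strategy for the same number of cops making strong moves, hence $c_s \le c_n$). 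Finally, the rightmost inequality is exactly the content of Lemma~\ref{L:upperBound}, namely $c_n(\overrightarrow{S}_t(G)) \leq c(G) + 1$. Chaining these three gives the displayed sandwich, which completes the proof.

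There is essentially no hard step here: the theorem is a packaging of prior results, and the only thing to be careful about is citing the correct direction of each lemma and making sure the parameter $t$ satisfies the hypothesis $t > 1$ demanded by Lemma~\ref{L:lowerBound} (which is already part of the standing assumption that the subdivision be simple, forcing $t \geq 2$). If I wanted to make the write-up fully self-contained, I would add one sentence recalling why $c_s(\overrightarrow{H}) \le c_n(\overrightarrow{H})$ for any oriented graph $\overrightarrow{H}$: a winning cop strategy in the normal model never uses a move against an arc, so it is in particular a legal (and still winning) strategy in the strong model, whence fewer or equally many cops suffice in the strong model.

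If any subtlety is worth a remark, it is that the two bounds need not coincide — the theorem leaves open whether $c_s(\overrightarrow{S}_t(G))$ equals $c(G)$ or $c(G)+1$, and likewise for $c_n$. I would not attempt to resolve that gap here; the proof of the stated theorem is simply the concatenation
\[
c(G) \;\overset{\text{Lem.~\ref{L:lowerBound}}}{\leq}\; c_s(\overrightarrow{S}_t(G)) \;\overset{(1)}{\leq}\; c_n(\overrightarrow{S}_t(G)) \;\overset{\text{Lem.~\ref{L:upperBound}}}{\leq}\; c(G)+1,
\]
and the write-up should be a two- or three-line proof that names these three ingredients in order.
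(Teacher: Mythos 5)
Your proposal is correct and is exactly how the paper obtains this theorem: it is stated as an immediate consequence of Lemma~\ref{L:lowerBound}, the general inequality $c_s \leq c_n$ from the introduction, and Lemma~\ref{L:upperBound}, with no further argument needed. The only nit is that your first parenthetical justification of $c_s \le c_n$ is phrased backwards; the clean version you give at the end (a normal-model winning strategy is a legal strong-model strategy) is the right one.
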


Theorem~\ref{th strong vs cn ub lb} also establishes a lower bound on the weak cop number of $\overrightarrow{S}_t(G)$. In the following result, we establish an upper bound on the weak cop number of $\overrightarrow{S}_2(G)$ in terms of $c(G)$. In particular, we have the following result.

\begin{theorem}\label{T:weakBound}
    Let $G$ be a simple graph. Then, $c(G) \leq c_w(\overrightarrow{S}_2(G)) \leq c(G)+2$.
\end{theorem}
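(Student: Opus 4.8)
The plan is to handle the two inequalities separately; the lower bound is free, and all the work is in the upper bound.

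\medskip
\noindent\textbf{Lower bound.} Since $c_s \le c_w$ by inequality~(1), Lemma~\ref{L:lowerBound} immediately gives $c(G) \le c_s(\overrightarrow{S}_2(G)) \le c_w(\overrightarrow{S}_2(G))$, so no separate argument is needed.

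\medskip
\noindent\textbf{Upper bound.} Set $k = c(G)$. I will exhibit a winning strategy for $k+2$ cops in the weak cop model on $\overrightarrow{S}_2(G)$, consisting of $k$ \emph{main} cops and two \emph{dummy} cops $D_1, D_2$. The main cops shadow a winning strategy of $k$ cops on $G$ through the projection $f$, in the spirit of Lemmas~\ref{L:lowerBound} and~\ref{L:upperBound}: I run an auxiliary game on $G$ in which the auxiliary robber always sits on $f(r)$, where $r$ is the robber's vertex in $\overrightarrow{S}_2(G)$, and a move of a main cop on $G$ from $a$ to $b\in N_G[a]$ is realized in $\overrightarrow{S}_2(G)$ over two consecutive rounds as the directed walk $a,\ b^a_1,\ b$ (a pass when $b=a$); this uses only weak moves, as required of the cops. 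By Observation~\ref{O:trivial}, after such a two-round block the $f$-image of a main cop has moved by at most one step in $G$, and the $f$-image of the robber moves by at most one step per round of $\overrightarrow{S}_2(G)$, since the $f$-images of the two endpoints of any single strong move in $\overrightarrow{S}_2(G)$ are equal or $G$-adjacent. Because the robber needs at least two rounds of $\overrightarrow{S}_2(G)$ to pass from one original vertex to another (every original vertex has only middle vertices as neighbours), the slowed-down main cops can keep pace, so the auxiliary game is a legal play of \textsc{Cops and Robber} on $G$ in which the main cops follow a winning strategy. Hence after finitely many rounds the auxiliary robber is caught: some main cop $\C_i$ occupies an original vertex $r$, and the robber occupies either $r$ (and is thereby captured in $\overrightarrow{S}_2(G)$ as well) or a middle vertex $x$ with $f(x)=r$, which forces $x = r^y_1$ for some $y\in N_G(r)$ — a vertex whose only in-neighbour is $y$ and only out-neighbour is $r$.

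In this remaining case, as in the proof of Lemma~\ref{L:upperBound}, the $G$-capture configuration guarantees that the main cops \emph{seal} $r$: a main cop stays on $r$, and for every $w\in N_G(r)$ some main cop is within two rounds of $w$ in $\overrightarrow{S}_2(G)$. The two dummy cops now flush the robber out. Since $\overrightarrow{S}_2(G)$ is strongly connected, $D_1$ can repeatedly chase the robber along arcs and threaten it, so the robber cannot loiter forever on any middle or original vertex near $r$; $D_1$ thereby forces the robber onto $r$ and then onto an out-middle $v^r_1$ for some $v\in N_G(r)$, with $D_1$ itself on $r$ at that instant. Because $v\in N_G(r)$, a main cop can reach $v$ within two rounds, or else $D_2$ is moved onto $v$; the robber on $v^r_1$ then has no safe weak move — it cannot stay (threatened by $D_1$ on $r$), cannot retreat to $r$ (blocked by $\C_i$), and cannot advance to $v$ (blocked by $D_2$ or the main cop) — so it is captured. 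The reason two dummy cops are needed here, rather than the single one that suffices in the normal cop model of Lemma~\ref{L:upperBound}, is exactly that a \emph{strong}-moving robber sitting on a middle vertex can retreat against the incoming arc: one pusher can pin it against its unique out-neighbour but cannot simultaneously block its unique in-neighbour.

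\medskip
\noindent\textbf{Main obstacle.} The delicate part is making the endgame rigorous. One must choreograph $D_1$, $D_2$, and the relevant main cop so that, during the few rounds in which $D_2$ (or the main cop) is being manoeuvred onto $v$, the robber cannot slip through a different middle vertex incident to $r$ and re-enter the neighbourhood of $r$ from another side; equivalently, the sealing of $r$ must be preserved throughout the flushing phase, which is where the two rounds of slack created by the slowdown and the precise form of the $G$-capture configuration are used. A secondary point requiring care is the synchronisation between the full-speed robber and the half-speed main cops while the robber loiters on middle vertices — this is precisely where the observation that crossing between two original vertices costs two rounds of $\overrightarrow{S}_2(G)$ is invoked, and it may be cleanest to read $f(r)$ once per two-round block only when the robber is on an original vertex and to absorb transitional rounds into the dummy-cop bookkeeping.
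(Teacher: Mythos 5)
Your overall architecture ($k$ shadowing cops plus two dummies, two-round blocks, a flush-out endgame) matches the paper's, but there are two genuine gaps, both located exactly where the real work of the proof lies. First, the legality of the auxiliary game on $G$ does not follow from your per-move bound. It is true that a single strong move in $\overrightarrow{S}_2(G)$ changes the $f$-image by at most one $G$-step, but two such moves compose into a non-edge: if the robber sits on $v^u_1$ (so $f=v$) and plays $v^u_1 \to u \to w^u_1$ within one block, its $f$-image goes from $v$ to $w$, and $vw$ need not be an edge of $G$ (it never is when $G$ is triangle-free). So defining the auxiliary robber as $f(r)$, whether read per round or once per block, produces illegal robber moves in $G$, and your remark that the robber ``needs two rounds to cross between original vertices'' does not exclude this trajectory, which visits only one original vertex. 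The paper avoids this by using a different projection (move the shadow to the unique original vertex visited during the block, if any, and otherwise leave it fixed) and by proving the weaker invariant that the real robber always lies in $N_{\overrightarrow{S}_2(G)}[\,\cdot\,]$ of the shadow; the price is that a capture in $G$ only corners the real robber near an original vertex rather than on it, which is what necessitates the endgame.

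Second, the endgame itself is the content of the theorem, and your proposal defers it: your ``Main obstacle'' paragraph correctly identifies that the robber on $r^y_1$ can retreat strongly to $y$ and leave the neighbourhood of $r$, but does not resolve it. The paper's resolution is a concrete choreography (its Claims~\ref{C:weak} and~\ref{C:w2}): when the robber steps onto $w^u_1$ or $u^w_1$, the cop at $v$ advances to $u^v_1$ to block the return arc into $u$ while the cop stationed in $N_G[w]$ walks to $w$, and the pusher $D_2$ denies loitering; and, crucially, if the robber was on a new vertex at the moment of the $G$-capture and escapes to some original $x\in N_G[u]\setminus\{v\}$, the capture configuration guarantees a cop in $N_G(x)$ which is then permanently repurposed as an escort $D_1$. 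The $k$ remaining cops restart the winning strategy on $G$ from scratch, with $D_1$ and $D_2$ forcing the robber back onto an original vertex at every synchronisation point (Claim~\ref{C:w2}), which is what finally makes the shadow game legal and lets Claim~\ref{C:weak} close the argument. Without this restart-with-escort mechanism, neither the legality issue in the first gap nor the flushing in the second is repaired, so the proposal as written does not yet constitute a proof of the upper bound. (The lower bound is fine and is argued exactly as in the paper.)
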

\begin{proof}
    The lower bound follows directly from Theorem~\ref{th strong vs cn ub lb} by taking $t=2$. To prove the upper bound, we provide a winning strategy using $c(G)+2$ weak cops against a strong robber. Let $c(G)=k$. Here also, we will play the two games simultaneously.
    
    \medskip

    \noindent \textit{Game Setup:} The game begins with $k$ cops placing themselves on the vertices of $G$ as per the winning strategy. Let the $k$ cops be marked as $\C_1, \ldots, \C_k$. We place $k$ cops on the same vertices of $\overrightarrow{S}_2(G)$, i.e., if a cop $\C_i$ is placed on a vertex $x$ in $G$, then $\C_i$ is placed on the same vertex $x$ in $\overrightarrow{S}_2(G)$ as well. Moreover, we have two extra \textit{dummy cops}, $D_1$ and $D_2$. Now, $\R$ enters a vertex, say $u$, in $\overrightarrow{S}_2(G)$. Then, we place the robber on $f(u)$ in $G$.

    \medskip

    \noindent \textit{Move Translations:} Now, the game proceeds as follows. Each round in $G$ is translated to two rounds in $\overrightarrow{S}_2(G)$. Each round in $G$ begins with the $k$ cops moving as per their winning strategy. This move is translated to two moves of the cop player in $\overrightarrow{S}_2(G)$ as follows: If a cop $\C_i$ moves from a vertex $u$ to a vertex $v$ in $G$, then $\C_i$ moves from $u$ to $v$ in $\overrightarrow{S}_2(G)$ along the directed path $u v^u_1 v$ in two moves. During these two moves of the cops, $\R$ can also make two moves in $\overrightarrow{S}_2(G)$. 
    
    Let $\R$ moves from a vertex $u$ to a vertex $v$, and then to a vertex $w$ ($u, v,$ and $w$ are not necessarily distinct). If one of $v$ or $w$ is an original vertex in $\overrightarrow{S}_2(G)$, then $\R$ moves to that vertex in $G$, otherwise $\R$ does not move in $G$. Notice that $v$ and $w$ both cannot be original vertices if they are distinct.  Hence, at most one of them is an original vertex, and therefore, the next move of $\R$ in $G$ is well defined. Moreover, observe that, following this procedure, after a round, if $\R$ is at a vertex $u$ in $G$, then $\R$ is at a vertex in $N_{\overrightarrow{S}_2(G)}[u]$ in $\overrightarrow{S}_2(G)$.    
    
    \medskip

    \noindent  \textit{Capture:} Since $k$ cops have a winning strategy in $G$, they will be able to capture $\R$ in $G$ after a finite number of rounds. Let $\R$ be at a vertex $u$ just before the capture. At this instance, there is a cop at a vertex $v\in N_G(u)$, and for every vertex $w\in N_G(u)\setminus \{v\}$, there is a cop on some vertex in $N_G[w]$. Consider the translation of this situation in $\overrightarrow{S}_2(G)$. $\R$ is at a vertex in $\{u\} \cup N^+(u) \cup N^-(u)$. Now, we have the following claim.

    \begin{claim}\label{C:weak}
        If $\R$ is at the original vertex $u$ at this instance, it will be captured after a finite number of rounds.
    \end{claim}
\begin{proofclaim}
First, we establish that $\R$ will have to move after every finite number of rounds. To see this, let the robber occupies a vertex, say $y$, and the dummy cop $D_2$ occupies a vertex, say $z$. If $\R$ does not make a move, then $D_2$ moves towards $y$ along a shortest path between $y$ and $z$ in $\overrightarrow{S}_2(G)$. Note that such a path always exists and has a finite length since $\overrightarrow{S}_2(G)$ is a strongly connected finite digraph. Thus, if $\R$ does not move from $y$, $D_2$ will eventually reach $y$ and capture $\R$ in a finite number of rounds.

Now, if $\R$ moves to vertex $u^v_1$, then the cop at $v$ will capture $\R$. If $\R$ moves to the vertex $v^u_1$, then the cop at $v$ stays at its current vertex, and the dummy cop $D_2$ keeps moving towards the vertex $u$. Since the vertex $v$ is occupied by a cop, $\R$ cannot move to $v$ (as long as a cop occupies $v$). Moreover, since $D_2$ is moving towards $u$, observe that if $\R$ keeps oscillating between $v^u_1$ and $u$, it will be captured after a finite number of rounds. Hence, after a finite number of rounds, $\R$ will have to move to a vertex $w^u_1$ or $u^w_1$. At this point, the cop at $v$ moves to $u^v_1$, ensuring that $\R$ cannot return to $u$ in the next move. Moreover, recall that there is a cop, say $\C_1$, at a vertex in $N_G[w]$. Now, $\C_1$ will move towards $w$. Now, since $\R$ is at $w^u_1$ or $u^w_1$, and it has to move every finite number of rounds, it will have to move to either $u$ or $w$, and it cannot move to $u$ due to the cop at $u^v_1$. Hence, $\R$ will have to move to $w$ where it will be captured by $\C_1$. \end{proofclaim}

Due to Claim~\ref{C:weak}, we can assume that $\R$ is at a new vertex in $N^+(u) \cup N^-(u)$. Now again, $D_2$ will move towards $\R$ and will force $\R$ to move after a finite number of rounds. At this point, $\R$ is at an original vertex $x\in N_G[u]\setminus \{v\}$ and there is a cop at a vertex $y$ in $N_G(x)$ (in case $\R$ does not get captured at this step). Without loss of generality, let us rename the cops such that the cop at $y$ is named $D_1$. Next, we have the following claim.

    \begin{claim}\label{C:w2}
        Let $\R$ be at an original vertex $u$ in $\overrightarrow{S}_2(G)$ and $D_1$ be at an original vertex $v$ such that $v\in N_G(u)$. Then, in a finite number of rounds, $D_1$ and $D_2$ can force $\R$ to move to a vertex $w\in N_G(u) \setminus \{v\}$ such that $D_1$ is at the vertex $u$.
    \end{claim}
\begin{proofclaim}
    If $\R$ does not move, $D_2$ moves towards $\R$  and forces $\R$ to move after a finite number of rounds. Similarly to the arguments presented in the proof of Claim~\ref{C:weak}, the cops force $\R$ to first move to a vertex $w^u_1$ or $u^w_1$ such that $w\in N_G(u) \setminus \{v\}$. At this point, $D_1$ moves to $u^v_1$ ensuring that $\R$ cannot come back to $u$. Since $\R$ is forced by $D_2$ to move after every finite number of rounds, it will have to move to $w$ after a finite number of rounds, and $D_1$ moves to $u$ at this point.    
\end{proofclaim}
        
Now, the game proceeds as follows. The $k$ cops, $\C_1,\ldots, \C_k$, begin in $G$ again as per their winning strategy in $G$. Now, the move translation is slightly different. For each round in $G$, the $k$ cops move as per their winning strategy. In $\overrightarrow{S}_2(G)$, the cops move as per the move translation explained above in two rounds. In these two rounds, if $\R$ has moved to an original vertex then we move $\R$ in $G$ accordingly. Otherwise, $D_1$ and $D_2$ force $\R$ to move to an original vertex, as per Claim~\ref{C:w2}. The other cops, $\C_1,\ldots,\C_k$ stay at their current position during these moves in $\overrightarrow{S}_2(G)$. Once $\R$ has moved to an original vertex in $\overrightarrow{S}_2(G)$, we move $\R$ in $G$ according to the move translation. In this case, observe that when $\R$ gets captured in $G$, it is at an original vertex in $\overrightarrow{S}_2(G)$. Therefore, due to Claim~\ref{C:weak}, it gets captured in $\overrightarrow{S}_2(G)$ as well after a finite number of rounds. This completes our proof.
\end{proof}

Here, we present an easy result that will be used to prove winning strategies for cops.
\begin{lemma}\label{L:trivialCapture}
Let $G$ be a simple graph. Consider the strong cop model in the graph $\overrightarrow{S}_t(G)$. If on a cop's move, $\R$ occupies a vertex of the form $v^u_i$, where $i \in [t-1]$, and there is a cop $\C$ occupying a vertex $x$ such that there is a directed path from either $x$ to $v$ or $v$ to $x$ of length at most $(t-i)+1$, then $\R$ will be captured in at most $2t$ rounds. 
\end{lemma}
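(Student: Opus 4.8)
The plan is to exploit that, in $\overrightarrow{S}_t(G)$, the robber $\R$ sitting on a new vertex $v^u_i$ is trapped on the directed path $Q := v^u_i\,v^u_{i+1}\cdots v^u_{t-1}\,v$. Indeed, by the explicit construction every new vertex has exactly one out-neighbour, so under the weak-move rule $\R$ can only advance one step along $Q$ or stay put, and it cannot leave $Q$ until it arrives at the original vertex $v$ (from which it would regain other options). I will let the cop $\C$ win in two phases. In \emph{Phase~1}, $\C$ walks to $v$: by hypothesis there is a directed path of length at most $(t-i)+1$ between $x$ and $v$, and since $\C$ may make strong moves it can traverse this path in either direction, reaching $v$ within $(t-i)+1$ rounds (or capturing $\R$ en route, which only helps). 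In \emph{Phase~2}, $\C$ walks \emph{backwards} along $Q$, from $v$ towards $v^u_i$ --- legal because strong moves ignore arc orientation --- decreasing by one, each round, the index of its position on $Q$.

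The crux is a timing claim: \emph{when Phase~1 ends, either $\R$ is already captured, or $\C$ is on $v$ and $\R$ is on some $v^u_p$ with $i\le p\le t-1$}. To see this, note that reaching $v$ costs $\R$ at least $t-i$ of its own moves (it is forced along $Q$), so the earliest $\R$ can occupy $v$ is immediately after its $(t-i)$-th move; by then $\C$ has made $t-i$ moves and hence is within distance $1$ of $v$ along its chosen path, so $\C$ steps onto $v$ on the very next (cop's) move and captures $\R$ before it can move off $v$. (If $\C$'s path is shorter, $\C$ already sits on $v$ and $\R$ simply walks onto it.) Thus $\R$ can never be on $v$ without being immediately captured, so at the end of Phase~1 it is on $Q\setminus\{v\}$, at index $p$ with $p\ge i$ (its index never decreases) and $p\le t-1$.

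For Phase~2, track the quantity $\delta :=$ (index of $\C$ on $Q$) $-$ (index of $\R$ on $Q$); it equals $t-p\ge 1$ at the start of Phase~2. Each cop move decreases $\delta$ by exactly $1$, and each robber move changes $\delta$ by $0$ or $-1$; moreover $\R$ can never overtake $\C$ on the path, so it cannot sneak back to $v$ behind $\C$. Hence $\delta$ is non-increasing, strictly decreases on every cop move, and reaches $0$ within $t-i$ further rounds --- at which moment $\C$ and $\R$ coincide. Combining the phases, $\R$ is caught within $(t-i+1)+(t-i)=2(t-i)+1\le 2t$ rounds, using $i\ge 1$.

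I expect the timing claim to be the main obstacle. One must be careful with the interleaving of the two players' turns, in particular with the boundary case where $\C$'s path to $v$ has length exactly $(t-i)+1$, and with the fact that the penultimate vertex of $\C$'s chosen path need not be $v^u_{t-1}$ --- so that $\R$'s arrival at $v$ does not by itself force a capture; rather it is $\C$'s subsequent move onto $v$ that does. Everything else is routine bookkeeping of the potential $\delta$ and of the round counts.
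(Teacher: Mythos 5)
Your proof is correct and follows essentially the same two-phase strategy as the paper's: the cop uses its strong moves to walk to $v$, arriving there before the robber can pass through it, and then chases the robber backwards along the one-way path $v^u_i\cdots v^u_{t-1}v$ on which the robber is trapped. Your explicit timing claim and the potential $\delta$ merely make precise the round-counting that the paper's proof leaves informal.
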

\begin{proof}
    Assume the scenario to be as in the statement. $\C$ begins by moving towards $v$ to decrease its distance to $v$ to $t-i$. Notice that a strong cop can move against the orientation of the arcs as well. Now, in the next $t-i$ rounds, the only possible moves for $\R$ are either staying at the same vertex or moving towards $v$ (since $\R$ can only make weak moves). Hence, after these $t-i$ moves, $\R$ is either at the vertex $v$ or at a vertex $v^u_j$ such that $j\geq i$. During these rounds, in each move, $\C$ moves towards $v$ and reaches $v$ after $t-i$ rounds. In this instance, if $\R$ is at the vertex $v$, it gets captured. Otherwise, $\R$ is at a vertex $v^u_j$ where $j\geq i$. Moreover, the only moves possible for $\R$ are to move towards $v$. Now, $\C$ can make strong moves towards $v^u_j$ and capture $\R$ in at most $t$ more rounds.
\end{proof}

Next, we provide a sufficient condition under which the cop number of $G$ and the strong cop number of its strong-subdivision is the same. 

\begin{theorem}\label{th triangle_free}
    Let $G$ be a triangle-free undirected graph. Then $c(G)=c_s(\overrightarrow{S}_t(G))$. 
\end{theorem}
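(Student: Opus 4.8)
The inequality $c(G)\le c_s(\overrightarrow{S}_t(G))$ is exactly Lemma~\ref{L:lowerBound}, so everything reduces to proving $c_s(\overrightarrow{S}_t(G))\le c(G)=:k$. The plan is to re-use the simultaneous-play scheme from the proof of Lemma~\ref{L:upperBound}, but to delete its extra (dummy) cop by having an ordinary cop exploit \emph{strong} moves; triangle-freeness is exactly what makes this possible. So, as in that proof, let the $k$ cops follow a winning strategy $\sigma$ on $G$, with each round on $G$ translated into $t$ rounds on $\overrightarrow{S}_t(G)$ along the directed $t$-paths; the robber $\R$ moves genuinely on $\overrightarrow{S}_t(G)$, and its \emph{shadow} on $G$ is $f$ of its current vertex. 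By Observation~\ref{O:trivial} each shadow step is a stay or a step to a neighbour, so the shadow plays a legal game on $G$; since $\sigma$ wins, it forces the shadow into the cornered configuration described in the proof of Lemma~\ref{L:upperBound}: the shadow is at a vertex $u$, some cop $\C$ is on a vertex $v\in N_G(u)$, and for every $w\in N_G(u)\setminus\{v\}$ some cop lies on a vertex of $N_G[w]$. In $\overrightarrow{S}_t(G)$ this reads: $\R$ is on $u$ or on some $u^{w'}_j$ (its shadow being $u$), $\C$ is on the original vertex $v$, and for each $w\in N_G(u)\setminus\{v\}$ a cop $\C_w$ is on an original vertex of $N_G[w]$.

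Here is the only use of the hypothesis. As $G$ is triangle-free, $w\in N_G(u)\setminus\{v\}$ forces $v\notin N_G[w]$ (else $uvw$ is a triangle); hence $\C$ is \emph{not} one of the catchers $\C_w$, so $\C$ is free to act as forcer-and-blocker. Being strong, $\C$ steps from $v$ onto the directed path $u\,v^u_1\cdots v^u_{t-1}\,v$ and walks it backwards toward $u$. This keeps the route $u\to v$ closed (a weak robber entering the path $u\,v^u_1\cdots v$ meets $\C$ head-on), and it forces $\R$: after $t$ rounds $\C$ is on $u$, so a robber still on $u$ is captured, and a robber loitering on a path $u^{w'}_j$ is then captured by $\C$ sweeping that (one-way) path backwards from $u$ — this is precisely Lemma~\ref{L:trivialCapture} with the cop at the path's endpoint. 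Thus $\R$ must, within those $t$ rounds, commit to an escape path $u\,w^u_1\cdots w$ for some $w\neq v$.

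Each catcher $\C_w$ (for $w\in N_G(u)\setminus\{v\}$) walks the directed path of length at most $t$ from its vertex to $w$ and then waits on $w$. Since traversing $u\,w^u_1\cdots w$ costs $\R$ a full $t$ moves and the cops move first, $\C_w$ is on $w$ no later than $\R$ reaches it, and then $\C_w$ captures $\R$ on that path by Lemma~\ref{L:trivialCapture} (cop at the endpoint $w$). A cop lying in $N_G[w_1]\cap N_G[w_2]$ for two neighbours of $u$ is no obstruction: it simply reacts to whichever path $\R$ commits to — $\R$ cannot switch, the paths being one-way. Altogether the capture uses $\C$ together with the (at most $k-1$) distinct catchers, i.e.\ at most $k$ strong cops, while the remaining cops are parked; hence $c_s(\overrightarrow{S}_t(G))\le c(G)$, which with Lemma~\ref{L:lowerBound} gives the theorem.

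The delicate step is the endgame's timing/parity bookkeeping: verifying that $\C$'s backward sweep really evicts $\R$ from $u$ before $\C$ arrives there — in particular when $\R$ starts deep on a path $u^{w'}_j$ (small $t-j$) and in the boundary case $t=2$ — and that each catcher reaches its endpoint just in time under the ``cops move first'' rule. A second point to nail down is the hand-off from the $\sigma$-simulation to this endgame, namely that the parked cops do not interfere and $\R$ cannot slip away from the vicinity of $u$ once the endgame begins. Finally, it is worth stressing where triangle-freeness is indispensable: only at the step ``$\C$ is distinct from the catchers''. For $\overrightarrow{S}_t(C_3)$ the single cop would have to be forcer and catcher at once, which is exactly why $c_s(\overrightarrow{S}_t(C_3))>c(C_3)$.
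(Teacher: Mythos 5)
Your proposal is correct and follows essentially the same route as the paper's proof: reuse the simultaneous-play simulation of Lemma~\ref{L:upperBound} to reach the cornered configuration, let the strong cop $\C$ at $v$ walk the one-way path $u\,v^u_1\cdots v^u_{t-1}\,v$ backwards to evict $\R$ from $u$, and finish with Lemma~\ref{L:trivialCapture} via the cops covering $N_G[w]$. If anything, you make the role of triangle-freeness (that $\C$ cannot be one of the catchers) more explicit than the paper does, and your flagged ``delicate steps'' are exactly the details the paper also leaves to the reader.
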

\begin{proof}
    Let $c(G)=k$. Due to Theorem~\ref{th strong vs cn ub lb}, to establish our claim, it is sufficient to prove that $k$ strong cops have a winning strategy in $\overrightarrow S_t(G)$, i.e., $c_s(\overrightarrow{S}_t(G))\leq k$. We provide such a strategy below. 

    Similarly to the proof of Lemma~\ref{L:upperBound}, we play two games simultaneously in $G$ and $\overrightarrow S_t(G)$ and use the winning strategy of $k$ cops in $G$ to achieve the following in $\overrightarrow S_t(G)$. Each cop $\C_i$ is on an original vertex, $\R$ occupies a vertex $u$ or a vertex of the form $u^w_j$ (where $j<t$). Moreover, there is a cop, say $\C$ on a vertex $v\in N_G(u)$, and for each vertex $x\in N_G(u)$, there is a cop on some vertex $y\in N_G[x]$ in $\overrightarrow S_t(G)$. Now, we distinguish the following two cases.

    \begin{enumerate}
        \item $\R$ occupies the vertex $u$: In this case, $\C$ moves towards $u$ following the directed path $u v^u_{1} \cdots v^u_{t-1} v$ against the orientation of the arcs. Now, if $\R$ does not move for the next $t$ rounds, then it will be captured by $\C$ in at most $t$ rounds. If $\R$ starts moving towards $v$ (using the same path), observe that it will be captured after at most $t$ rounds.  If $\R$ starts moving towards some vertex $w\in N(u)\setminus \{v\}$, then observe that $\R$ is at the vertex $w^u_1$ and there is a cop at a distance at most $t$ from $w$. Hence, due to Lemma~\ref{L:trivialCapture}, $\R$ will be captured after at most $2t$ cop moves.

        \item $\R$ occupies a vertex $u^x_i$, then $\C$ starts moving towards $u$ following the directed path $u v^u_{1} \cdots v^u_{t-1} v$ against the orientation of the arcs. If $\R$ does not move to reach $u$ before $\C$ reaches $u$, then $\R$ will be captured in the next $t$ moves. If $\R$ reaches $u$ when $\C$ is at some vertex $v^u_{j}$, then notice that we have considered a similar scenario in the previous case. Hence $\R$ will be captured in at most $2t$ rounds.
    \end{enumerate}

    Thus, $k$ strong cops will capture $\R$ in $\overrightarrow{S}_t(G)$ after a finite number of rounds.  
\end{proof}

Even though strong-cop win characterization is an open problem, we can characterize all strong-cop win oriented graphs that are strong-subdivisions. 

\begin{theorem}\label{th SG strong-cn copwin characterization}
Let $G$ be a graph. Then $\overrightarrow{S}_t(G)$ is strong-cop win if and only if $G$ is a tree. 
\end{theorem}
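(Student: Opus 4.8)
We handle the two implications in turn.

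\textbf{If $G$ is a tree.} A tree has no cycles, so in particular it is triangle-free, and it is cop win (every leaf is a corner vertex, and retracting corners reduces $G$ to a point). Hence $c(G)=1$, and Theorem~\ref{th triangle_free} gives $c_s(\overrightarrow{S}_t(G))=c(G)=1$, so $\overrightarrow{S}_t(G)$ is strong-cop win. This part I would simply record as a one-line corollary.

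\textbf{If $G$ is not a tree.} I would prove the contrapositive, showing $c_s(\overrightarrow{S}_t(G))\ge 2$. By Lemma~\ref{L:lowerBound} we may assume $c(G)=1$, so $G$ is a connected cop-win graph that nevertheless contains a cycle. Fix a \emph{shortest} cycle $C=v_0v_1\cdots v_{\ell-1}$ of $G$ (so $\ell\ge 3$). A shortest cycle is an isometric subgraph of $G$: if two of its vertices $x,y$ had $d_G(x,y)$ smaller than the length $|P|$ of the shorter $C$-arc $P$ joining them, then a shortest $x$--$y$ path together with $P$ would be two distinct $x$--$y$ paths whose union contains a cycle of length at most $d_G(x,y)+|P|<2|P|\le|C|$, contradicting minimality. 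Let $SC$ be the union inside $\overrightarrow{S}_t(G)$ of the $2\ell$ directed length-$t$ paths replacing the edges of $C$; thus $SC$ is a copy of $\overrightarrow{S}_t(C_\ell)$ whose only vertices having a neighbour outside $SC$ are the original vertices $v_0,\dots,v_{\ell-1}$. Since $C$ is isometric in $G$, the subgraph $SC$ is isometric in the underlying graph of $\overrightarrow{S}_t(G)$, so no route through the rest of the graph gives the cop a shortcut between two vertices of $SC$. The plan is then to give the weak robber a strategy that keeps it on $SC$ and is never captured by a single strong cop.

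Think of $SC$ as a circular ``road'' of circumference $m=\ell t$ with the $v_i$ at positions $0,t,\dots,(\ell-1)t$, and with a lane oriented each way between consecutive original vertices. The robber circulates around $SC$, changing direction only at the $v_i$, and maintains a distance invariant: while the cop is on $SC$ it changes its circular position by at most one per round (exactly the classical single-cop pursuit on a cycle of length $\ge 4$), while off $SC$ it cannot capture and can re-enter only through some $v_i$. The one feature absent from the undirected game is that the robber cannot reverse instantaneously --- when the cop begins closing in from the far side, the robber must first run to the next original vertex, up to $t-1$ steps away, before it can turn --- so the robber needs a safety margin of order $t$ on both sides of the circle. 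This margin is available precisely because $\ell\ge 3$ makes $m=\ell t>2t$: there is always room to stay clear of the cop on both arcs, and at every $v_i$ at least one of the two outgoing directions keeps the invariant.

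The main obstacle is making this last step precise: one must formulate the robber's invariant so that it is preserved by \emph{every} legal cop move --- in particular by the cop repeatedly reversing, by the cop leaving and re-entering $SC$ (this is where the isometry of $C$ is used), and in the tight girth-$3$ cases $\ell=3$, $t\in\{2,3\}$ where the margins are smallest --- and then check that under the invariant the robber is never on, nor forced onto, the cop's vertex. Granting this, the robber survives forever, so $c_s(\overrightarrow{S}_t(G))\ge 2$; combined with the first part, $\overrightarrow{S}_t(G)$ is strong-cop win if and only if $G$ is a tree.
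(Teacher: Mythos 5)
Your forward direction is exactly the paper's: $G$ a tree implies $c(G)=1$ and triangle-freeness, so Theorem~\ref{th triangle_free} gives $c_s(\overrightarrow{S}_t(G))=1$. No issue there.

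The reverse direction, however, has a genuine gap, and you flag it yourself: the entire content of that direction is the weak robber's evasion strategy against one strong cop on the subdivided cycle, and you never actually produce it --- you write ``granting this, the robber survives forever.'' That is not a deferrable technicality here, for three concrete reasons. First, your ``circular position changes by at most one per round'' invariant does not control capture: between consecutive original vertices $v_i,v_{i+1}$ there are \emph{two} antiparallel directed paths, so the cop and the robber can occupy the same circular position while sitting on different, non-adjacent new vertices; a one-dimensional distance invariant on the circle of circumference $\ell t$ is simply not the right state space, and the classical cycle-pursuit argument does not transfer verbatim. Second, the weak robber can only reverse at original vertices, so the invariant you want is \emph{not} preserved move-by-move by arbitrary cop play (the cop can reverse every round while the robber is committed to a lane); any correct strategy has to be of the ``wait at an original vertex until attacked, then commit to a full length-$t$ escape run'' type, and the verification that the cop cannot cut off that run is a distance computation you have not done. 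Third, taking a \emph{shortest} cycle does not let you dodge the hardest case: when $G$ has girth $3$ your cycle is a triangle, so you must in any event analyze the subdivided triangle with $\ell=3$, $t=2$, where your ``safety margin of order $t$'' heuristic ($m=\ell t>2t$) is at its weakest.

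For comparison, the paper's route is narrower but complete: it first reduces to $c(G)=1$ via Lemma~\ref{L:lowerBound}, proves that a cop-win graph that is not a tree must contain a triangle $uvw$ (by deleting leaves and inspecting a corner vertex of the resulting graph), and then confines the robber to the subdivided triangle with an explicit strategy: sit at an original vertex $x$ until the cop reaches $x^y_{t-1}$ or $y^x_1$, then flee along the directed path to the third vertex $z$, checking that every vertex of that path is strictly closer to the robber than to the cop. Your isometric-shortest-cycle idea is a legitimate alternative and, if completed, would be self-contained (it needs no structural claim about cop-win graphs), but it requires handling all $\ell\ge 3$ rather than just $\ell=3$, i.e., strictly more case analysis than the paper's, and the cases you would have to write out carefully are exactly the ones you have left open. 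To repair the proof with the least effort, either carry out the wait-and-flee analysis on the subdivided triangle (recovering the paper's argument) or prove a clean lemma of the form ``$c_s(\overrightarrow{S}_t(C_\ell))\ge 2$ for all $\ell\ge 3$, $t\ge 2$, and this persists under isometric embedding,'' with the two-lane structure handled explicitly.
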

\begin{proof}
    In one direction, let $G$ be a tree. Then, we know that $c(G)=1$. Moreover, since $G$ is triangle-free, we have $c_s(\overrightarrow{S}_t(G))= c(G)=1$ due to Theorem~\ref{th triangle_free}. Thus, $\overrightarrow{S}_t(G)$ is strong-cop win.

    In the reverse direction, we show that if $G$ is not a tree, then $c_s(\overrightarrow{S}_t(G))>1$. If $c(G) >1$, then $c_s(\overrightarrow{S}_t(G))>1$ due to Lemma~\ref{L:lowerBound}. Therefore, we assume $c(G)=1$ and look for a contradiction. Next, we have the following easy claim, which we prove for the sake of completeness.

    \begin{claim}\label{C:trivial}
        Let $G$ be a cop win graph that is not a tree. Then, $G$ contains at least one triangle. 
    \end{claim}
\begin{proofclaim}
Let $v$ be a corner vertex in $G$. Then $G$ is cop win if and only if $G-\{v\}$ is cop win~\cite{bonato2011game}. Therefore, let $H$ be the graph that we get by removing the \textit{leaves} (a vertex of degree 1) from $G$ recursively and exhaustively. Since each leaf is a corner, we have that $H$ is also a cop win graph. 

Since $G$ was not a tree, $H$ contains at least one cycle. Now, since $H$ is cop win, it must contain at least one corner vertex, say $u$. Let $u$ be a corner vertex of $v$, i.e., $N[u]\subseteq N[v]$. Now, since $u$ has degree at least two, $u$ has a neighbor $w$ distinct from $v$. Finally, since $N[u] \subseteq N[v]$, we have that $uvw$ is a triangle in $H$ as well as in $G$.    
\end{proofclaim}         

    Hence, due to Claim~\ref{C:trivial}, we have that $G$ contains at least one triangle, say $uvw$. Now, consider the graph $\overrightarrow{S}_t(G)$. The robber will stay in the subgraph of $\overrightarrow{S}_t(G)$ corresponding to the triangle $uvw$. The game begins with the cop, say $\C$, placing itself on a vertex of the graph. Irrespective of the beginning position of $\C$, there is at least one vertex $x\in \{u,v,w\}$ such that $x$ is not an in-neighbor or an out-neighbor of the current position of $\C$. Now, $\R$ stays on this vertex $x$ unless $\C$ moves to a vertex $x^y_{t-1}$ or a vertex $y^x_1$ where $y\in N_G(x)$. At this instance, notice that there is at least one vertex $z\in \{u,v,w\} \setminus \{x,y\}$.  Now, $\R$ moves to the vertex $z^x_1$, and in the next $t-1$ moves, keeps moving towards $z$. Now, we claim that $\C$ cannot capture $\R$ in these $t$ moves. It is because each vertex in the directed path between $x$ and $z$ is closer to $\R$ than $\C$. Finally, observe that once $\C$ reaches $z$, we are in a situation identical to the one we started. Hence, $\R$ will wait at the vertex $z$ unless it is under attack and then move to some other vertex of the triangle $uvw$. This way $\R$ will evade the capture forever. This completes our proof.
\end{proof}

\section{Weak Subdivisions}\label{weak-sub}
Let $\overrightarrow{G}$ be an oriented graph. Let $\overrightarrow{W}_t(\overrightarrow{G})$ be the oriented graph obtained by replacing each arc $\overrightarrow{uv}$ of $\overrightarrow{G}$ by a length $t$ directed path from $u$ to $v$ of the form $u v^u_1 \cdots v^u_{t-1} v$. The 
oriented graph $\overrightarrow{W}_t(\overrightarrow{G})$ is called the \textit{weak $t$-subdivision} of $\overrightarrow{G}$. Similarly to the definition of strong subdivisions, the vertices $u$ and $v$ are termed as \textit{original} vertices while the vertices of the form $u^w_j$ ($j<t$) are termed as \textit{new} vertices. Moreover, similar to Section~\ref{strong-sub}, we define a function $g:V(\overrightarrow{W}_t(\overrightarrow{G})) \rightarrow V(\overrightarrow{G})$ such that for any $x,y \in V(\overrightarrow{G})$, $i\in [t-1]$ and $x^y_i \in V(\overrightarrow{W}_t(\overrightarrow{G}))$, we have $g(x^y_i) = x$ and $g(x) = x$. Finally, we have an observation similar to Observation~\ref{O:trivial}. 

\begin{observation}\label{O:wtrivail}
    For any two vertices $x,y\in V(\overrightarrow{W}_t(\overrightarrow{G}))$, if there is a directed path from $x$ to $y$ of length at most $t$, then $g(y) \in N_{\overrightarrow{G}}^+[g(x)]$.
\end{observation}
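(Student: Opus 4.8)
The plan is to follow the same route as the (omitted) proof of Observation~\ref{O:trivial}, adapted to the fact that the weak subdivision is directed, so that the conclusion is one-sided. The engine of the argument is the elementary structural fact that every new vertex of $\overrightarrow{W}_t(\overrightarrow{G})$ has in-degree and out-degree exactly $1$: a new vertex $v^u_i$ is an internal vertex of the unique subdivided directed path $P_{uv}\colon u\,v^u_1\cdots v^u_{t-1}\,v$ attached to the arc $\overrightarrow{uv}$, and distinct arcs of $\overrightarrow{G}$ give rise to disjoint sets of new vertices. I would record three consequences. First, once a directed path reaches a new vertex of $P_{uv}$ it is forced to continue along $P_{uv}$, and the next original vertex it meets is $v=g(v^u_i)$; in particular, if $a$ is the first original vertex on a directed path $P$ starting at $x$ (taking $a=x$ when $x$ is already original), then $g(x)=a$. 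Second, and symmetrically, if $y$ is a new vertex of $P_{ab}$, then any directed path ending at $y$ must have entered $P_{ab}$ at the original vertex $a$, so $a$ is the last original vertex on that path and $\overrightarrow{a\,g(y)}=\overrightarrow{ab}$ is an arc of $\overrightarrow{G}$. Third, between any two consecutive original vertices on a directed path, the path is a full copy of some $P_{ab}$, hence of length exactly $t$.

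Given these, I would finish by splitting on the number $r$ of original vertices lying on the given directed path $P$ from $x$ to $y$ of length $\ell\le t$. If $r=0$, then $P$ never leaves the set of new vertices of a single subdivided path $P_{uv}$ (it cannot, by the degree fact), so $g(x)=g(y)=v$ and $g(y)\in N^+_{\overrightarrow{G}}[g(x)]$ trivially; the degenerate case $\ell=0$ gives $x=y$ and is immediate. If $r=1$ with unique original vertex $a$, then $g(x)=a$ by the first consequence, and either $y=a$, giving $g(y)=a=g(x)$, or $y$ is a new vertex, in which case the second consequence gives that $\overrightarrow{a\,g(y)}$ is an arc of $\overrightarrow{G}$, i.e.\ $g(y)\in N^+_{\overrightarrow{G}}(g(x))$. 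Finally, if $r\ge 2$, then the portion of $P$ between its first and last original vertices already contains $r-1\ge1$ full copies of subdivided paths, so $\ell\ge(r-1)t\ge t$; since $\ell\le t$, this forces $\ell=t$, $r=2$, and $P$ to be exactly a single subdivided path $P_{ab}$ with $x=a$ and $y=b$, whence $g(y)=b\in N^+_{\overrightarrow{G}}(a)=N^+_{\overrightarrow{G}}(g(x))$. In every case $g(y)\in N^+_{\overrightarrow{G}}[g(x)]$.

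I do not anticipate a real obstacle; this is meant to be as routine as Observation~\ref{O:trivial}. The only points requiring a bit of care are keeping the orientation straight — the new vertices of $P_{uv}$ are sent by $g$ to the head $v$, which is exactly why the conclusion here is the one-sided $g(y)\in N^+_{\overrightarrow{G}}[g(x)]$ rather than the symmetric statement of Observation~\ref{O:trivial} — and making sure the boundary cases ($\ell=0$, and $x$ or $y$ being original versus new) are absorbed by the case split on $r$, which they are.
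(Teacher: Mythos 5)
Your argument is correct. The paper states Observation~\ref{O:wtrivail} without proof (as it does Observation~\ref{O:trivial}), so there is no authorial argument to compare against; your forced-path case analysis on the number $r$ of original vertices lying on $P$ is exactly the routine verification the authors are leaving implicit, and each case is handled soundly --- in particular the key point that new vertices of $P_{uv}$ all map under $g$ to the head $v$, which is what makes the one-sided conclusion $g(y)\in N^+_{\overrightarrow{G}}[g(x)]$ come out. The only cosmetic imprecision is in your second consequence, where ``any directed path ending at $y$ must have entered $P_{ab}$ at $a$'' should be qualified by ``if the path contains an original vertex at all''; since you only invoke it when $r\ge 1$, this does not affect the proof.
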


We know that, given a simple graph $G$, its cop number does not decrease if we subdivide each edge of $G$~\cite{berarducci1993cop}. We prove its oriented analog for each of the three models. First, we have the following lemma.

\begin{lemma}\label{L:wnDivision}
    Let $\overrightarrow{G}$ be an oriented graph. Then, $c_n(\overrightarrow{W}_t(\overrightarrow{G})) \geq c_n(\overrightarrow{G})$ and  $c_w(\overrightarrow{W}_t(\overrightarrow{G})) \geq c_w(\overrightarrow{G})$.
\end{lemma}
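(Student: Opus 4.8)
The plan is to prove both inequalities simultaneously using the same simulation idea that drove Lemma~\ref{L:lowerBound}, but now in the easier ``weak subdivision'' setting where only forward moves are allowed. Concretely, suppose $c_n(\overrightarrow{W}_t(\overrightarrow{G})) = k$ (respectively $c_w(\overrightarrow{W}_t(\overrightarrow{G})) = k$); I will show $k$ cops have a winning strategy on $\overrightarrow{G}$ in the corresponding model by playing an auxiliary game on $\overrightarrow{W}_t(\overrightarrow{G})$ and translating the cops' winning moves there back to $\overrightarrow{G}$ via the projection $g$.

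First I would set up the correspondence: each round of play on $\overrightarrow{G}$ is expanded into $t$ rounds on $\overrightarrow{W}_t(\overrightarrow{G})$. When the robber moves along an arc $\overrightarrow{uv}$ in $\overrightarrow{G}$, on the subdivided graph the robber is walked along the length-$t$ directed path $u\, v^u_1 \cdots v^u_{t-1}\, v$ over these $t$ rounds (and stays put if it does not move); since in the normal model the real robber makes only weak moves, and in the weak model the real robber makes strong moves, in either case this walk is legal on $\overrightarrow{W}_t(\overrightarrow{G})$ — note that a strong move of the robber along $\overrightarrow{uv}$ against the orientation corresponds to walking a path in the reverse subdivided direction, which exists because $\overrightarrow{uv}$ being an arc of $\overrightarrow{G}$ means there is a $u$-to-$v$ directed path of length $t$ in $\overrightarrow{W}_t(\overrightarrow{G})$; for the weak-cop direction one must be slightly careful here and I would treat the robber's underlying-graph move by selecting whichever of the two arc directions is actually present, which is exactly the content of ``strong move on $\overrightarrow{G}$'' — but since $\overrightarrow{G}$ is oriented, a strong robber move uses a genuine arc, so the required subdivided path is present. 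The cops, meanwhile, follow their winning strategy on $\overrightarrow{W}_t(\overrightarrow{G})$ for these $t$ rounds; if a cop travels from $x$ to $y$ during a block of $t$ rounds, then there is a directed path of length at most $t$ from $x$ to $y$, so by Observation~\ref{O:wtrivail} we have $g(y) \in N^+_{\overrightarrow{G}}[g(x)]$, meaning the cop can legally move from $g(x)$ to $g(y)$ in one weak move on $\overrightarrow{G}$ — this is exactly the move we assign it. Finally, when the simulated cops capture the simulated robber on $\overrightarrow{W}_t(\overrightarrow{G})$, say both on a vertex $z$, then on $\overrightarrow{G}$ both the cop and robber sit on $g(z)$, so the robber is captured there; hence $k$ cops suffice on $\overrightarrow{G}$, giving $c_n(\overrightarrow{G}) \le k$ (resp. $c_w(\overrightarrow{G}) \le k$).

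The main technical point to get right — and the only place the two models genuinely differ — is verifying that the robber's translated moves on $\overrightarrow{W}_t(\overrightarrow{G})$ are always legal for the model in question, and conversely that after each $t$-round block the robber on the subdivided graph has landed on an original vertex so that the correspondence with $\overrightarrow{G}$ stays synchronized. For the normal model this is immediate since the robber walks forward along a subdivided arc that projects to a genuine arc it used on $\overrightarrow{G}$; for the weak model the robber on $\overrightarrow{G}$ may traverse an arc in either direction, but each such traversal still corresponds to an honest directed path of length $t$ in $\overrightarrow{W}_t(\overrightarrow{G})$, so the walk remains legal. A secondary subtlety is that during a block the real robber on $\overrightarrow{G}$ need not actually be committed to a single arc yet — but since we let the round on $\overrightarrow{G}$ complete first and only then play out the $t$ rounds on $\overrightarrow{W}_t(\overrightarrow{G})$, the robber's $\overrightarrow{G}$-move is known and fixed before we need to translate it, so there is no timing conflict. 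I expect writing down these legality checks cleanly to be the bulk of the work; the capture and cop-translation steps are routine consequences of Observation~\ref{O:wtrivail}.
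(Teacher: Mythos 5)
Your proposal is correct and follows essentially the same route as the paper: assume $k$ cops win on $\overrightarrow{W}_t(\overrightarrow{G})$, expand each round on $\overrightarrow{G}$ into $t$ rounds on the subdivision, walk the robber along (or, for a strong move in the weak model, backward along) the length-$t$ subdivided path, and project each cop's $t$-round displacement back to a single legal weak move on $\overrightarrow{G}$ via Observation~\ref{O:wtrivail}. The legality checks you flag for the two robber models and the point about fixing the robber's $\overrightarrow{G}$-move before translating it are exactly the details the paper's proof handles.
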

\begin{proof}
Here we will play two games simultaneously in $\overrightarrow{G}$ and $\overrightarrow{W}_t(\overrightarrow{G})$. The proof of $c_n(\overrightarrow{W}_t(\overrightarrow{G})) \geq c_n(\overrightarrow{G})$ and $c_w(\overrightarrow{W}_t(\overrightarrow{G})) \geq c_w(\overrightarrow{G})$ are almost identical. We provide proof of $c_n(\overrightarrow{W}_t(\overrightarrow{G})) \geq c_n(\overrightarrow{G})$ first.

Let $c_n(\overrightarrow{W}_t(\overrightarrow{G})) =k$. We will use the strategy of these $k$ cops in $\overrightarrow{W}_t(\overrightarrow{G})$ to get a strategy for $k$ cops in $\overrightarrow{G}$. 

\medskip
\noindent\textit{Setup:} The game starts with $k$ cops placing themselves on the vertices of $G$ as per the winning strategy. Now, if a cop $\C_i$ is placed at a vertex $x$ in $\overrightarrow{W}_t(\overrightarrow{G})$, then we place $\C_i$ on $g(x)$ in $G$. Next, $\R$ enters on a vertex in $\overrightarrow{G}$. Then, we place $\R$ in the same vertex in $\overrightarrow{G}$.

\medskip
\noindent \textit{Move Translation:} Now, the game proceeds as follows. To ease the presentation, let the cops skip their first move in $\overrightarrow{G}$ as well as in $\overrightarrow{W}_t(\overrightarrow{G})$, and then each round in $\overrightarrow{G}$ consists of $\R$ moving, followed by cops moving. Note that it does not hurt the winning strategy of $k$ cops in $\overrightarrow{W}_t(\overrightarrow{G})$ since the cops can win irrespective of the starting position of $\R$ in $\overrightarrow{W}_t(\overrightarrow{G})$.

Then, each round in $\overrightarrow{G}$ is translated to $t$ rounds in $\overrightarrow{W}_t(\overrightarrow{G})$. If $\R$ moves from a vertex $u$ to $v$ in $\overrightarrow{G}$, then $\R$ moves from $u$ to $v$ in $\overrightarrow{W}_t(\overrightarrow{G})$, along the path $u v^u_1 \cdots v^u_{t-1} v$, in next $t$ rounds. In these $t$ rounds, cops move in $\overrightarrow{W}_t(\overrightarrow{G})$ as per their winning strategy. Note that each cop $\C_i$ moves from a vertex $x$ to a vertex $y$ such that there is a directed path from $x$ to $y$ in $\overrightarrow{W}_t(\overrightarrow{G})$ of length at most $t$. Hence, due to Observation~\ref{O:wtrivail}, $g(y) \in N_{\overrightarrow{G}}^+[g(x)]$. Thus, we move $\C_i$ from vertex $g(x)$ to $g(y)$ in $\overrightarrow{G}$.

\medskip
\noindent\textit{Capture:} Since $k$ cops have a winning strategy in $\overrightarrow{W}_t(\overrightarrow{G})$, they will capture $\R$ in $\overrightarrow{W}_t(\overrightarrow{G})$ after a finite number of rounds. Notice that at this point, $\R$ gets captured in $\overrightarrow{G}$ as well. 

This completes the proof of $c_n(\overrightarrow{W}_t(\overrightarrow{G})) \geq c_n(\overrightarrow{G})$. Now, the proof of $c_w(\overrightarrow{W}_t(\overrightarrow{G})) \geq c_w(\overrightarrow{G})$ is similar with the following changes. Let $c_w(\overrightarrow{W}_t(\overrightarrow{G})) = k$. Then we similarly borrow the strategy of $k$ cops in $\overrightarrow{W}_t(\overrightarrow{G})$ to get a winning strategy in $\overrightarrow{G}$. Here the setup is exactly the same. Next, in a move, if $\R$ makes a strong move from $u$ to $v$ in $\overrightarrow{G}$, then note that $\R$ can move from $u$ to $v$ in $\overrightarrow{W}_t(\overrightarrow{G})$ by making $t$ strong moves. Finally, the cops will move in $\overrightarrow{W}_t(\overrightarrow{G})$ according to the winning strategy and notice that when $\R$ gets captured in $\overrightarrow{W}_t(\overrightarrow{G})$, it gets captured in $\overrightarrow{G}$ as well. 
\end{proof}

In the following lemma, we prove that the strong cop number of an oriented graph does not decrease by operation of weak subdivisions.
\begin{lemma}\label{L:swDivision}
  Let $\overrightarrow{G}$ be an oriented graph. Then, $c_s(\overrightarrow{W}_t(\overrightarrow{G})) \geq c_s(\overrightarrow{G})$.  
\end{lemma}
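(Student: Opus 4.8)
The plan is to mimic the structure of the proof of Lemma~\ref{L:wnDivision}, playing two games simultaneously in $\overrightarrow{G}$ and $\overrightarrow{W}_t(\overrightarrow{G})$, but with the extra care required because in the strong cop model the cops can traverse arcs against their orientation while the robber cannot. Concretely, suppose $c_s(\overrightarrow{W}_t(\overrightarrow{G})) = k$; I want to produce a winning strategy for $k$ strong cops against a weak robber in $\overrightarrow{G}$. As before, the robber $\R$ plays ``for real'' in $\overrightarrow{G}$ and the cops import their winning strategy from $\overrightarrow{W}_t(\overrightarrow{G})$, translating each round of $\overrightarrow{G}$ into $t$ rounds of $\overrightarrow{W}_t(\overrightarrow{G})$ via the projection $g$.

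The first thing I would do is set up the correspondence carefully. When $\R$ makes a weak move from $u$ to $v$ along an arc $\overrightarrow{uv}$ of $\overrightarrow{G}$, its shadow walks the directed path $u\,v^u_1\cdots v^u_{t-1}\,v$ in $t$ rounds of $\overrightarrow{W}_t(\overrightarrow{G})$ (and if $\R$ stays put, so does its shadow). Meanwhile the $k$ strong cops respond in $\overrightarrow{W}_t(\overrightarrow{G})$ according to their winning strategy for these $t$ rounds. The key point is that a strong cop $\C_i$ in $\overrightarrow{W}_t(\overrightarrow{G})$ starting at $x$ and ending at $y$ after these $t$ moves uses only arcs of $\overrightarrow{W}_t(\overrightarrow{G})$, possibly against their orientation, but the whole excursion stays within a single subdivided arc's ``reach'' of length at most $t$, so $x$ and $y$ are joined by a directed path of length at most $t$ in $\overrightarrow{W}_t(\overrightarrow{G})$ \emph{in one direction or the other}. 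I then need the version of Observation~\ref{O:wtrivail} that says: if there is a directed path from $x$ to $y$ of length at most $t$ in $\overrightarrow{W}_t(\overrightarrow{G})$, then $g(x)$ and $g(y)$ are joined by an arc (in the corresponding direction) or are equal in $\overrightarrow{G}$ — so whichever of the two directed paths exists, $\C_i$ can legally make a single strong move from $g(x)$ to $g(y)$ in $\overrightarrow{G}$. This is exactly where the ``strong'' hypothesis on the cops is used: a normal/weak cop could not in general make the move $g(x)\to g(y)$ if the length-$\le t$ path ran from $y$ to $x$.

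Having made the translation legal on both sides, the capture argument is the easy part: since $k$ cops win in $\overrightarrow{W}_t(\overrightarrow{G})$, after finitely many rounds some $\C_i$ occupies the same vertex as $\R$'s shadow there; projecting under $g$, at the end of the corresponding round $\C_i$ and $\R$ occupy the same vertex of $\overrightarrow{G}$ (one must check the synchronization so that the comparison is made at the end of a full block of $t$ rounds, exactly as in Lemma~\ref{L:wnDivision}, with the cops skipping their first move so that a round is ``robber moves, then cops move'').

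The main obstacle — and the only genuinely delicate point — is verifying that the strong cop's $t$-round excursion in $\overrightarrow{W}_t(\overrightarrow{G})$ really does project to a single legal strong move of $\C_i$ in $\overrightarrow{G}$, i.e.\ that $g$ of the start and end are adjacent in $\overrightarrow{G}$. Within a block of $t$ rounds a strong cop can in principle wander back and forth over more than one subdivided arc's worth of vertices, so I need to argue that over $t$ moves it cannot travel ``net distance'' more than one original arc in the subdivided graph: any two vertices reachable from one another by a walk of length $t$ in $\overrightarrow{W}_t(\overrightarrow{G})$ have projections at distance at most $1$ in the underlying graph of $\overrightarrow{G}$, and tracking the arc-direction gives the right orientation for the strong cop's move. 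Once that structural fact about $\overrightarrow{W}_t(\overrightarrow{G})$ is nailed down (a short case analysis on where $x$ and $y$ lie relative to original vertices), the rest follows the template of Lemma~\ref{L:wnDivision} verbatim.
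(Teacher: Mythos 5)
Your high-level plan (simulate the $\overrightarrow{W}_t(\overrightarrow{G})$ strategy, translate each round of $\overrightarrow{G}$ into a block of $t$ rounds) matches the paper, but the step you yourself flag as ``the only genuinely delicate point'' is where the argument breaks. You project every cop position through the fixed map $g$ and claim that the start $x$ and end $y$ of a strong cop's $t$-move walk are always joined by a directed path of length at most $t$ in one direction or the other, so that $g(x)$ and $g(y)$ are adjacent or equal. Both claims are false. Take an arc $\overrightarrow{uv}$ and a second arc $\overrightarrow{uz}$ with $v$ and $z$ non-adjacent in $\overrightarrow{G}$, and let a strong cop walk $v^u_{t-1}\to v^u_{t-2}\to\cdots\to v^u_1\to u\to z^u_1$; this is a legal sequence of $t$ strong moves in $\overrightarrow{W}_t(\overrightarrow{G})$. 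Here $g(x)=v$ and $g(y)=z$ are not adjacent, so no single move (strong or otherwise) realizes $g(x)\to g(y)$ in $\overrightarrow{G}$; and there is no directed path of length at most $t$ between $x$ and $y$ in either direction (every directed route from one to the other must re-enter $u$ or $z^u_1$ through arcs that force a detour of length at least $2t-1$), so no variant of Observation~\ref{O:wtrivail} rescues the step. The problem is precisely that a strong cop's $t$-step \emph{walk}, unlike the robber's, is not a directed path and can bounce off an original vertex into a different subdivided arc whose $g$-image is an unrelated vertex.

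The paper avoids this by \emph{not} using the fixed projection $g$ for the cops. Instead it maintains the looser invariant that a cop sitting at a new vertex $v^u_j$ of $\overrightarrow{W}_t(\overrightarrow{G})$ occupies one of the two endpoints $u$ or $v$ in $\overrightarrow{G}$, and updates the cop's position in $\overrightarrow{G}$ according to which original vertices its $t$-move walk actually visits: if both endpoints of the walk are original the cop makes the corresponding (strong) move; if the walk visits exactly one original vertex $a$ the cop moves to $a$; if it visits none the cop stays put. Since a walk of length $t$ can visit at most one original vertex unless it starts and ends at adjacent originals, this update is always a single legal strong move from the cop's current position in $\overrightarrow{G}$, and the invariant is preserved. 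Your proposal needs to be repaired along these lines; as written, the move translation for the cops is not well defined.
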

\begin{proof}
    The proof here is similar to the proof of Lemma~\ref{L:wnDivision}. Here also, we will play two games simultaneously: one on $\overrightarrow{W}_t(\overrightarrow{G})$ and one on $\overrightarrow{G}$ in the following manner. Let $c_s(\overrightarrow{W}_t(\overrightarrow{G}))= k$.

    \medskip
    \noindent\textit{Setup:} The game begins with $k$ cops placing themselves on the vertices of $\overrightarrow{W}_t(\overrightarrow{G})$ as per their winning strategy. Now, if a cop $\C_i$ is placed on a vertex $x\in V(\overrightarrow{W}_t(\overrightarrow{G}))$, then we place $\C_i$ on a vertex of $\overrightarrow{G}$ in the following manner. If $x$ is an original vertex, then we place $\C_i$ on $x$. Else, $x$ is of the form $u^v_j$ (where $j < t$), and then we place $\C_i$ on $u$. (The choice of $u$ and $v$ is not important here. We can choose either of them, and the rest of the proof will remain the same.) Then, $\R$ enters on a vertex, say $w$ of $\overrightarrow{G}$. Then, we place $\R$ on $w$ in $\overrightarrow{W}_t(\overrightarrow{G})$. 

    \medskip
    \noindent\textit{Move Translation:} Now, the game proceeds as follows. The cops miss their first move as in the proof of Lemma~\ref{L:wnDivision}. Hence, we may assume that each round contains first the move of $\R$ and then the move of cops. Hence $\R$ moves in $\overrightarrow{G}$ from a vertex, say $u$, to a vertex, say $v$. This move gets translated to $t$ moves of $\R$ in $\overrightarrow{W}_t(\overrightarrow{G})$. Now,  $\R$ moves in $\overrightarrow{W}_t(\overrightarrow{G})$ from $u$ to $w$ along the directed path between $u$ and $w$ of length $t$. In these $t$ rounds, the cops move as per their winning strategy. Now, these $t$ moves of a cop $\C_i$ in $\overrightarrow{W}_t(\overrightarrow{G})$ are translated to a move of $\C_i$ in $\overrightarrow{G}$ in the following manner. Note that if $\C_i$ moves from a vertex, say $x$, to a vertex, say $y$, then there is a directed path from either $x$ to $y$ or from $y$ to $x$ of length at most $t$. If both $x$ and $y$ are original vertices, then $\C_i$ is at the vertex $x$ in $\overrightarrow{G}$ and it moves to the vertex $y$ in $\overrightarrow{G}$ (note that $x$ and $y$ are adjacent in $\overrightarrow{G}$). Otherwise, the $x$ to $y$ path in $G$ contains at most one original vertex. If it does not contain any original vertex, then $\C_i$ stays at the same vertex, else $\C_i$ moves to the original vertex contained in the path, say $z$, in $\overrightarrow{G}$. The only thing to observe here is that if $\C_i$ is at a vertex of the form $u^v_j$ ($j<t$) in $\overrightarrow{W}_t(\overrightarrow{G})$, then $\C_i$ is either at $u$ or $v$ in $\overrightarrow{G}$ and hence, $\C_i$ can always make the promised moves.

    \medskip
    \noindent\textit{Capture:} Finally, notice that when $\R$ gets captured in $\overrightarrow{W}_t(\overrightarrow{G})$ at an original vertex, then it gets captured in $\overrightarrow{G}$ as well. 
\end{proof}

Finally, the following theorem is implied by Lemma~\ref{L:wnDivision} and Lemma~\ref{L:swDivision}.

\begin{theorem}\label{T:wDivision}
Let $\overrightarrow{G}$ be an oriented graph. Then,
\begin{enumerate}[(i)]
     
\item $c_n(\overrightarrow{W}_t(\overrightarrow{G})) \geq c_n(\overrightarrow{G})$, 

\item  $c_w(\overrightarrow{W}_t(\overrightarrow{G})) \geq c_w(\overrightarrow{G})$,

\item  $c_s(\overrightarrow{W}_t(\overrightarrow{G})) \geq c_s(\overrightarrow{G})$. 
\end{enumerate}
\end{theorem}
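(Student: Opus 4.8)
The plan is to observe that Theorem~\ref{T:wDivision} is an immediate repackaging of the two lemmas just proved. Parts~(i) and~(ii) are literally the two inequalities established in Lemma~\ref{L:wnDivision}, and part~(iii) is exactly the inequality of Lemma~\ref{L:swDivision}. Hence the proof reduces to invoking these two results; no new argument is required.

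If one instead wanted a self-contained account, the approach would be uniform across all three models: play two games in parallel, one on $\overrightarrow{G}$ and one on $\overrightarrow{W}_t(\overrightarrow{G})$, letting the robber move genuinely on $\overrightarrow{G}$ while a shadow robber traverses the length-$t$ subdivided paths of $\overrightarrow{W}_t(\overrightarrow{G})$, and letting the cops genuinely execute an optimal strategy on $\overrightarrow{W}_t(\overrightarrow{G})$ whose moves are then pushed down to $\overrightarrow{G}$ via the projection $g$. Each block of $t$ consecutive rounds on $\overrightarrow{W}_t(\overrightarrow{G})$ corresponds to one round on $\overrightarrow{G}$, and Observation~\ref{O:wtrivail} guarantees that a cop's net displacement over such a block projects to a legal move in $\overrightarrow{G}$ (a weak move in the normal and strong models, a strong move in the weak model, since there the cops are weak but we only need to mimic displacements along a directed path). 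A capture on $\overrightarrow{W}_t(\overrightarrow{G})$ occurring at an original vertex then projects to a capture on $\overrightarrow{G}$.

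The only delicate points — and these are already dealt with inside the two lemmas — are the initial placement (or relabelling) of cops sitting on new vertices, where one simply declares such a cop to project to a fixed endpoint of its subdivided path, and the bookkeeping needed so that one can assume the robber is caught while occupying an original vertex. Since nothing remains to be done, the substantive content of this section lives entirely in Lemmas~\ref{L:wnDivision} and~\ref{L:swDivision}, and Theorem~\ref{T:wDivision} follows as a one-line corollary.
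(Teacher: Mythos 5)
Your proposal matches the paper exactly: the paper offers no separate argument for Theorem~\ref{T:wDivision} and simply notes that it is implied by Lemma~\ref{L:wnDivision} (parts (i) and (ii)) and Lemma~\ref{L:swDivision} (part (iii)), which is precisely your reading. The supplementary sketch you give of the parallel-games argument also mirrors the lemmas' proofs, so nothing further is needed.
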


\section{Computational Complexity}\label{com-compl}
In this section, we establish that assuming $P\neq NP$, for an oriented graph $\overrightarrow{G}$, there can be no polynomial time algorithm to decide whether $c_x(\overrightarrow{G}) = k$ where $x\in \{s,n,w\}$, even when $\overrightarrow{G}$ is restricted to be a $2$-degenerate oriented bipartite graph. These results are not very surprising since most pursuit-evasion games are indeed computationally difficult. We shall use the following well-known result on approximation hardness of \textsc{Minimum Dominating Set} (\textsc{MDS})~\cite{DSHard}. For a graph $G$,  $\gamma(G)$ denotes its \textit{domination number}, that is, the size of a minimum dominating set of $G$. 

\begin{proposition}[\cite{DSHard}]\label{P:AHard}
Unless $P=NP$, there is no polynomial time approximation algorithm that approximates \textsc{Minimum Dominating Set} with an approximation ratio 
$o(\log n)$. 
\end{proposition}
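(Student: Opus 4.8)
This statement is the standard inapproximability of \textsc{Minimum Dominating Set}, and the plan is to derive it from the better-known inapproximability of \textsc{Set Cover} via the classical approximation-preserving equivalence between the two problems. So I would not prove anything about PCPs directly; instead I would exhibit a gap-preserving reduction and then cite the \textsc{Set Cover} hardness as a black box.

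The first step is to recall the reduction from \textsc{Set Cover} to \textsc{Minimum Dominating Set}. Given a \textsc{Set Cover} instance with universe $U=\{e_1,\dots,e_p\}$ and family $\mathcal S=\{S_1,\dots,S_m\}$ of subsets of $U$ (after trivial preprocessing we may assume every $e_i$ lies in some $S_j$, and $1\le m\le 2^p$), build the graph $G$ on vertex set $U\cup\mathcal S$ by joining each $S_j$ to every $e_i\in S_j$ and turning $\mathcal S$ into a clique. The key claim to verify is $\gamma(G)=\tau(U,\mathcal S)$, where $\tau$ is the minimum size of a set cover. The inequality $\gamma(G)\le\tau$ is immediate, since a cover $\mathcal C\subseteq\mathcal S$ dominates every $e_i$ (it is covered) and every vertex of the clique $\mathcal S$. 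For $\gamma(G)\ge\tau$ I would use a cleanup/swap argument: from any dominating set $D$, whenever $e_i\in D$ replace it by an arbitrary $S_j\ni e_i$; the size does not increase, $e_i$ is still dominated, and $e_i$'s other neighbours were all in $\mathcal S$, which is dominated as soon as $D\cap\mathcal S\neq\emptyset$. Iterating, we obtain a dominating set contained in $\mathcal S$, which is exactly a set cover. Since $n:=|V(G)|=p+m$ is polynomial in the input size (and $\log n=\Theta(\log p)$ under $m\le 2^p$), an $o(\log n)$-approximation algorithm for \textsc{Minimum Dominating Set} on $G$ would translate into an $o(\log p)$-approximation algorithm for \textsc{Set Cover}.

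The final step is to invoke the known hardness of \textsc{Set Cover}: assuming $P\neq NP$, \textsc{Set Cover} admits no polynomial-time $(1-\varepsilon)\ln p$-approximation for any fixed $\varepsilon>0$ (Dinur–Steurer, building on Feige's analysis together with the combinatorial two-prover PCP of Moshkovitz–Raz); for the weaker $\Omega(\log n)$ bound asserted here it already suffices to cite the $c\ln p$ hardness of Raz–Safra / Alon–Moshkovitz–Safra. Composing with the reduction of the previous paragraph yields the claimed $o(\log n)$-inapproximability of \textsc{Minimum Dominating Set} under $P\neq NP$. There is essentially no mathematical obstacle in this argument — all the difficulty is packaged inside the cited PCP-based \textsc{Set Cover} hardness; the only points requiring care are that the graph blow-up is polynomial (so that the $\log$ factor transfers cleanly) and that the swap argument is applied correctly in the degenerate cases removed by preprocessing.
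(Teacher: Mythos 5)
This statement is not proved in the paper at all: it is imported verbatim as a known result, cited to the reference \cite{DSHard}, and used only as a black box in Section~\ref{com-compl}. So there is no ``paper proof'' to compare against; what you have written is a self-contained justification that the paper deliberately omits. Your argument is the standard and correct one: the textbook \textsc{Set Cover} $\to$ \textsc{Minimum Dominating Set} reduction (elements plus a clique on the sets), the swap argument showing $\gamma(G)=\tau$, and then the PCP-based logarithmic inapproximability of \textsc{Set Cover} as the external ingredient. The swap step is handled correctly, including the observation that once $D\cap\mathcal S\neq\emptyset$ the whole clique is dominated.

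One small inaccuracy worth fixing: the parenthetical claim that $\log n=\Theta(\log p)$ ``under $m\le 2^p$'' is not right as stated, since $m=2^p$ would give $n\ge 2^p$ and hence $\log n=\Omega(p)$, destroying the transfer of the $\log$ factor. What you actually need, and what the hard \textsc{Set Cover} instances produced by the PCP constructions provide, is $m=\mathrm{poly}(p)$ (equivalently, that the instance is encoded with size polynomial in $p$), which gives $\log n=\Theta(\log p)$ and lets the $o(\log n)$ ratio for \textsc{Minimum Dominating Set} translate into an $o(\log p)$ ratio for \textsc{Set Cover}. With that correction the derivation is sound, modulo the cited hardness of \textsc{Set Cover}, which is exactly where the paper also places all of the difficulty.
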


Fomin et al.~\cite{fomin} proved that \textsc{Cops and Robber} is NP-hard. They did so by providing a reduction from \textsc{MDS} on a graph $G$ to \textsc{Cops and Robber} on a graph $G'$. Moreover, in their construction, they had the following result.

\begin{proposition}[\cite{fomin}]\label{P:NHard}
A graph $G$ has a dominating set of size $k$ if and only if $G'$ is $k$-copwin.
\end{proposition}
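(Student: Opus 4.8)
The plan is to reconstruct the reduction of \cite{fomin}, which takes an instance $(G,k)$ of \textsc{MDS} and builds a graph $G'$ whose cop number encodes $\gamma(G)$. First I would pin down the gadget: take several ``clean'' copies $V_1,\dots,V_{k+1}$ of $V(G)$, add a hub vertex $h$ adjacent to every copied vertex, keep each copy $V_i$ sparse internally, and join $v_i$ to $w_j$ for $i\neq j$ precisely when $w\in N_G[v]$. The governing principle, which I would isolate as a preliminary lemma before invoking the game at all, is that a family of vertices of $G'$ ``covers'' every potential hiding spot of the robber if and only if the underlying vertices of $V(G)$ form a dominating set of $G$; the $k+1$ copies guarantee, by pigeonhole against $\le k$ cops, that at least one copy is always left uncovered, and $h$ gives the robber a fast way to relocate.

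Next I would handle the easy direction, $\gamma(G)\le k \Rightarrow G'$ is $k$-copwin. Given a dominating set $D$ of size $k$, place the cops on the images of $D$ inside a single copy (reserving one cop near $h$ if the gadget requires it). Since $D$ dominates $G$, every copied vertex and the hub is occupied by or adjacent to a cop; hence, wherever the robber is placed, it is captured on the cops' first move, so $c(G')\le k$.

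Then comes the substantive direction, $c(G')\le k \Rightarrow \gamma(G)\le k$, which I would prove contrapositively: assuming $\gamma(G)>k$, I exhibit an evasion strategy for the robber against $k$ cops. The robber always sits on some copied vertex $u_j$, maintaining the invariant that the hub and copy $j$ are ``clean'' and that $u$ is not dominated in $G$ by $S$, the projection onto $V(G)$ of the current cop positions; since $|S|\le k<\gamma(G)$ such a $u$ exists, and by pigeonhole a clean copy $j$ exists. After each cop move I recompute a clean copy and an undominated underlying vertex for the new configuration and move the robber there in a single step --- along a cross-copy edge when the target shares the same underlying vertex, or via a two-step detour through $h$ otherwise --- and a short case analysis confirms the invariant can always be restored before any cop reaches the robber. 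Hence the robber survives indefinitely, so $c(G')>k$.

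The main obstacle is exactly this last case analysis: one must be certain the robber can \emph{reach} a fresh safe vertex in time despite each cop's ability to advance (or to ``jump'' within its copy) on every turn. This is precisely what forces the redundancy built into $G'$ --- enough copies that pigeonhole always leaves one clean, together with the hub for quick transit --- and verifying that these features genuinely suffice is the technical heart of \cite{fomin}'s argument.
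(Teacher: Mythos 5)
This proposition is quoted from~\cite{fomin} and the paper supplies no proof of it, so your reconstruction can only be judged against the statement itself --- and there it fails. The fatal problem is your hub vertex $h$: you make $h$ adjacent to \emph{every} copied vertex, i.e., to every vertex of $G'$ other than $h$ itself. Then $N[h]=V(G')$, so a single cop placed on $h$ captures the robber on its first move and $c(G')=1$ regardless of $G$. Consequently the direction ``$G'$ is $k$-copwin $\Rightarrow \gamma(G)\le k$'' is false for your gadget whenever $\gamma(G)\ge 2$, and the evasion strategy cannot be repaired as written: every cop standing on a copied vertex is itself adjacent to $h$, so the ``two-step detour through $h$'' is never safe, and a cop parked on $h$ forbids the robber from standing anywhere at all. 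Any vertex adjacent to all of $V(G')$ makes the lower bound $c(G')\ge\gamma(G)$ impossible, so the hub must go, at which point the reachability of a fresh safe vertex --- which you yourself identify as the technical heart and defer to ``a short case analysis'' --- is left entirely unresolved: the new undominated vertex $u'$ need not lie in $N_G[u]$, so no cross-copy edge from the robber's current position need exist.

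There are secondary gaps as well. In the easy direction, cops placed on the images of a dominating set $D$ inside a single copy $V_1$ do not dominate the vertices of $V_1$ itself, since you keep each copy internally sparse; a robber starting in the cops' own copy is not captured in one move, so even $\gamma(G)\le k\Rightarrow c(G')\le k$ needs more argument than you give. Finally, your $G'$ depends on $k$ (you take $k+1$ copies), whereas the paper's Corollary~\ref{cor complexity} uses the identity $\gamma(G)=c(G')$ for a single graph $G'$ built from $G$ alone; the construction of~\cite{fomin} is engineered so that one graph works for every $k$ simultaneously. The overall architecture you describe --- dominating set gives a one-move capture, pigeonhole over redundant copies gives the robber an undominated refuge --- is the right spirit, but the gadget you propose does not realize it.
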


Next, consider the graph $G'$. From $G'$, we get the graph $\overrightarrow{S}_2(G')$. Hence, we have the following corollary as a consequence of Proposition~\ref{P:NHard}, Theorem~\ref{th strong vs cn ub lb}, and Theorem~\ref{T:weakBound}.

\begin{corollary}\label{cor complexity}
For any graph $G$, we have 
$$\gamma(G) \leq c_s(\overrightarrow{S}_2(G')) \leq c_n(\overrightarrow{S}_2(G')) \leq c_w(\overrightarrow{S}_2(G')) \leq \gamma(G)+2.$$
\end{corollary}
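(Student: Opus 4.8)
The plan is to assemble the bound directly from the three ingredients already in hand. The first step is to record that Proposition~\ref{P:NHard} is equivalent to the exact identity $c(G') = \gamma(G)$. Indeed, taking $k = \gamma(G)$ in Proposition~\ref{P:NHard} shows that $G'$ is $\gamma(G)$-copwin, so $c(G') \le \gamma(G)$; conversely, if $c(G') = k_0$, then $G'$ is $k_0$-copwin, and the other direction of Proposition~\ref{P:NHard} produces a dominating set of $G$ of size $k_0$, whence $\gamma(G) \le k_0 = c(G')$. Here I would also note that the graph $G'$ output by the Fomin et al.\ construction is simple, connected, and finite, so that the standing hypotheses of Theorems~\ref{th strong vs cn ub lb} and~\ref{T:weakBound} are met when those results are applied to $G'$.

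The second step is to apply Theorem~\ref{th strong vs cn ub lb} to the simple graph $G'$ with $t = 2$, giving
\[
c(G') \;\le\; c_s(\overrightarrow{S}_2(G')) \;\le\; c_n(\overrightarrow{S}_2(G')) \;\le\; c(G') + 1,
\]
and Theorem~\ref{T:weakBound} to $G'$, giving
\[
c(G') \;\le\; c_w(\overrightarrow{S}_2(G')) \;\le\; c(G') + 2 .
\]
The third step is to glue these two chains together using the general relation $c_s(\overrightarrow{H}) \le c_n(\overrightarrow{H}) \le c_w(\overrightarrow{H})$, valid for every oriented graph $\overrightarrow{H}$ and in particular for $\overrightarrow{H} = \overrightarrow{S}_2(G')$; this supplies the missing link $c_n(\overrightarrow{S}_2(G')) \le c_w(\overrightarrow{S}_2(G'))$. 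Substituting $c(G') = \gamma(G)$ throughout then yields
\[
\gamma(G) \;\le\; c_s(\overrightarrow{S}_2(G')) \;\le\; c_n(\overrightarrow{S}_2(G')) \;\le\; c_w(\overrightarrow{S}_2(G')) \;\le\; \gamma(G) + 2,
\]
which is precisely the claimed statement.

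There is no real obstacle here: the corollary is a routine composition of Proposition~\ref{P:NHard} with the subdivision bounds of Theorems~\ref{th strong vs cn ub lb} and~\ref{T:weakBound}. The only points needing a line of justification are the two-sided reading of Proposition~\ref{P:NHard} (that it gives $c(G') = \gamma(G)$ and not merely a one-sided inequality) and the verification that $G'$ satisfies the ``simple, finite, connected'' hypothesis under which those theorems were proved; both are immediate from the construction of $G'$.
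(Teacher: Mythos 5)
Your proposal is correct and follows essentially the same route as the paper: combine the bounds of Theorems~\ref{th strong vs cn ub lb} and~\ref{T:weakBound} applied to $G'$ with $t=2$, link the chains via the general inequality $c_s \le c_n \le c_w$, and substitute $\gamma(G) = c(G')$ from Proposition~\ref{P:NHard}. The only difference is that you spell out the two-sided reading of Proposition~\ref{P:NHard} and the hypotheses on $G'$, which the paper leaves implicit.
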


\begin{proof}
From Theorem~\ref{th strong vs cn ub lb} and Theorem~\ref{T:weakBound}, it follows that $c(G')\leq c_s(\overrightarrow{S}_2(G')) \leq c_n(\overrightarrow{S}_2(G'))$  $\leq c_w(\overrightarrow{S}_2(G')) \leq c(G')+2$. Now, combining this with the fact that $\gamma(G) = c(G')$ (Proposition~\ref{P:NHard}), we have that $\gamma(G) \leq c_s(\overrightarrow{S}_2(G')) \leq c_n(\overrightarrow{S}_2(G')) \leq c_w(\overrightarrow{S}_2(G')) \leq \gamma(G)+2$.  
\end{proof}

Hence, if we can compute any of $c_s(\overrightarrow{S}_2(G'))$, $c_n(\overrightarrow{S}_2(G'))$, or $c_w(\overrightarrow{S}_2(G'))$ in polynomial time, then we have an additive $+2$ approximation for dominating set, which would imply that $P=NP$ (due to Proposition~\ref{P:AHard}). Therefore, we have the following theorem.

\begin{theorem}
Unless $P=NP$, for an oriented graph $\overrightarrow{G}$, there is no polynomial time algorithm to compute any of $c_s(\overrightarrow{G})$, $c_n(\overrightarrow{G})$, or $c_w(\overrightarrow{G})$ even if we restrict ourselves to  $2$-degenerate bipartite oriented graphs. 
\end{theorem}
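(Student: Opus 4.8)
The plan is to chain the existing reduction machinery with the inapproximability of \textsc{Minimum Dominating Set}. Starting from an arbitrary connected graph $G$, I would first invoke the polynomial-time construction of Fomin et al.\ to produce the (connected) graph $G'$ with $\gamma(G)=c(G')$ (Proposition~\ref{P:NHard}), and then pass to its strong $2$-subdivision $\overrightarrow{S}_2(G')$, which is clearly computable in polynomial time. Corollary~\ref{cor complexity} (itself a consequence of Theorems~\ref{th strong vs cn ub lb} and~\ref{T:weakBound}) already gives
\[
\gamma(G)\ \le\ c_s(\overrightarrow{S}_2(G'))\ \le\ c_n(\overrightarrow{S}_2(G'))\ \le\ c_w(\overrightarrow{S}_2(G'))\ \le\ \gamma(G)+2 .
\]
Hence, for any fixed $x\in\{s,n,w\}$, a polynomial-time algorithm computing $c_x(\overrightarrow{S}_2(G'))$ (equivalently, deciding $c_x=k$, since $c_x$ is bounded by the number of vertices) would output an integer $v$ with $\gamma(G)\le v\le \gamma(G)+2$. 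As $\gamma(G)\ge 1$ for every graph, $v\le 3\gamma(G)$, so this is a polynomial-time factor-$3$ approximation of the domination number; since a constant ratio is in particular $o(\log n)$, this contradicts Proposition~\ref{P:AHard} unless $P=NP$.

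The one point that needs care is verifying that $\overrightarrow{S}_2(G')$ actually lies in the claimed restricted class, i.e.\ that it is a $2$-degenerate bipartite oriented graph. It is an oriented graph by construction: between the images of two adjacent original vertices $u,v$ one has the directed paths $u\,v^u_1\,v$ and $v\,u^v_1\,u$, but no arc together with its reverse. For bipartiteness I would take the partition of $V(\overrightarrow{S}_2(G'))$ into original vertices and new vertices: every new vertex (of the form $v^u_1$) is adjacent in the underlying graph only to the two original vertices $u$ and $v$, and every original vertex is adjacent only to new vertices, so this is a proper $2$-coloring. For $2$-degeneracy, note that every new vertex has degree exactly $2$ in the underlying graph; thus an arbitrary induced subgraph $H$ either contains a new vertex, which then has degree at most $2$ in $H$, or consists only of original vertices, which then induce an edgeless graph. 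In either case $H$ has a vertex of degree at most $2$, so $\overrightarrow{S}_2(G')$ is $2$-degenerate.

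To assemble: fix $x\in\{s,n,w\}$, assume for contradiction that $c_x$ is polynomial-time computable on $2$-degenerate bipartite oriented graphs, run that algorithm on $\overrightarrow{S}_2(G')$, and recover $\gamma(G)$ up to additive error $2$, obtaining $P=NP$ as above. I do not expect a serious obstacle: essentially all the work is already done in Theorems~\ref{th strong vs cn ub lb} and~\ref{T:weakBound}, Proposition~\ref{P:NHard}, and Corollary~\ref{cor complexity}. The only items deserving explicit attention are (i) confirming that the Fomin et al.\ gadget graph $G'$ is connected, so that the strong subdivision is defined on it, and (ii) being careful in the last step that the additive $+2$ slack is absorbed into a constant multiplicative ratio (done above using $\gamma(G)\ge 1$), so that Proposition~\ref{P:AHard} genuinely applies.
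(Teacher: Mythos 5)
Your proposal is correct and follows essentially the same route as the paper: apply Corollary~\ref{cor complexity} to the Fomin et al.\ graph $G'$ and observe that $\overrightarrow{S}_2(G')$ is a $2$-degenerate bipartite oriented graph. You are in fact slightly more careful than the paper in converting the additive $+2$ guarantee into a multiplicative factor-$3$ approximation (using $\gamma(G)\ge 1$) so that Proposition~\ref{P:AHard} literally applies, but this is a presentational refinement rather than a different argument.
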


\begin{proof}
    The proof follows from Corollary~\ref{cor complexity} and the observation that $\overrightarrow{S}_2(G)$ of any simple graph $G$ is bipartite and $2$-degenerate. 
\end{proof}

\section{Conclusions}\label{conclusion}
In this paper, we considered three variants of  the \textsc{Cops and Robber} game on oriented graphs, namely \textit{strong cop model}, \textit{normal cop model}, and \textit{weak cop model} with respect to subdivisions and retracts. We generalized and established various results on the relation between the cop numbers in these variants and the subdivisions and retracts. One interesting implication of our result concerning subdivisions was that computing the cop number in all these three models is computationally difficult. More specifically, unless $P=NP$, none of these problems can be solved in polynomial time on oriented graphs even when input is restricted to 2-degenerate bipartite graphs. We also remark that the idea of the proof of Theorem~\ref{th triangle_free} can also be used to establish that if we subdivide each edge of a triangle-free undirected graph an equal number of times, the cop number does not change.

We are still very far from a good understanding of the \textsc{Cops and Robber} game on oriented graphs. For example, the question of characterizing the strong-cop win graphs and normal-cop win graphs, our original motivation to study this problem, still remains open.  In an attempt to characterize strong-cop win oriented graphs, in Theorem~\ref{th strong-retract} we showed that a strong-retract of an oriented graph retains the strong-cop number. One natural question here can be to find out (all) the non-trivial examples of oriented stong-cop win graphs $\overrightarrow{G}$ which do not contain a strong-retract that is strong-cop win. 

Moreover, while the game is well-understood on several undirected graph classes like planar graphs~\cite{fromme1984game}, bounded-genus graphs~\cite{bowler}, geometric intersection graphs~\cite{ourString}, minor-free graphs~\cite{andreae}, we only know an upper bound of $O(\sqrt{n})$~\cite{loh2017cops} on the cop number of strongly connected planar directed graphs and do not know any lower bound better than $\omega(1)$~\cite{loh2017cops}. So, another interesting research direction is to explore the cop number of the (strongly connected) directed counterparts of the above-mentioned graph classes.

\bibliographystyle{abbrv}
\bibliography{main}

\end{document}